\newcommand{\marg}[1]{}
\newcommand{\note}[1]{} 				
\newcommand{\defi}[1]{\textsf{#1}} 				% for defined terms
\DeclareMathOperator{\fix}{fix}
\newcommand{\F}{{\mathbb F}}
\newcommand{\Q}{{\mathbb Q}}
\newcommand{\Z}{{\mathbb Z}}
\newcommand{\Qbar}{{\overline{\Q}}}
\newcommand{\Zhat}{{\hat{\Z}}}
\newcommand{\Kbar}{{\overline{K}}}
\newcommand{\calC}{{\mathcal C}}
\newcommand{\calE}{{\mathcal E}}
\newcommand{\calO}{{\mathcal O}}
\newcommand{\scrE}{{\mathscr E}}
\def\Q{\mathbb{Q}}
\def\P{\mathbb{P}}
\def\A{\mathbb{A}}
\def\Z{\mathbb{Z}}
\DeclareMathOperator{\tr}{tr}
\DeclareMathOperator{\rk}{rk}
\DeclareMathOperator{\im}{im}
\DeclareMathOperator{\Jac}{Jac}
\DeclareMathOperator{\Aut}{Aut}
\DeclareMathOperator{\Gal}{Gal}
\DeclareMathOperator{\Spec}{Spec}  
 \DeclareMathOperator{\rank}{rank}
\newcommand{\GL}{\operatorname{GL}}
\newcommand{\SL}{\operatorname{SL}}
 \numberwithin{equation}{subsection}
\newtheorem{theorem}[subsection]{Theorem}
\newtheorem{lemma}[subsection]{Lemma}
\newtheorem{corollary}[subsection]{Corollary}
\theoremstyle{definition}
\newtheorem{definition}[subsection]{Definition}
\newtheorem{question}[subsection]{Question}
\newtheorem{example}[subsection]{Example}
\theoremstyle{remark}
\newtheorem{remark}[subsection]{Remark}
\newtheorem*{thmnonum}{Theorem}
\begin{document}
%\begin{document}

\title[Elliptic curves over $\Q$ and 2-adic Images of Galois]
{Elliptic curves over $\Q$ and 2-adic Images of Galois}
%\subjclass[2000]{Primary 55T10; Secondary 55-01}
%keywords{}
\author{Jeremy Rouse and David Zureick-Brown}
% \address{Dept. of Mathematics and Computer science, Emory University,
% Atlanta, GA 30322 USA}
%\address{Department of Mathematics, University of California, Berkeley, CA
%94720-3840, USA}
%\email{dzb@mathcs.emory.edu}

\date{\today}

\maketitle

\begin{abstract}
We give a classification of all possible $2$-adic images of Galois
representations associated to elliptic curves over $\Q$. To this end,
we compute the `arithmetically maximal' tower of 2-power level modular curves, develop techniques to compute their equations, and classify the rational points on these curves. 
\end{abstract}

%\tableofcontents
%****************************************************************************

%%% Local Variables: 
%%% mode: latex
%%% TeX-master: "articleTemplate"
%%% End: 

\vspace{7pt}

%****************************************************************************
\section{Introduction}
%****************************************************************************

Serre proved in \cite{Serre:openImage} that, for an elliptic curve $E$
over a number field $K$ without complex multiplication, the index of
the mod $n$ Galois representation $\rho_{E,n}$ associated to $E$ is
\emph{bounded} -- there is an integer $N_E$ such that for any
$n$, the index of $\rho_{E,n}(G_K)$ in $\GL_2(\Z/n\Z)$ is at most
$N_E$ (equivalently, the mod $\ell$ representation is surjective for
large $\ell$). Serre's proof is ineffective in the sense that it does not
compute $N_E$ explicitly; in fact one conjectures that for $\ell >
37$, $\rho_{E,\ell}$ is surjective.  The early progress on this
problem \cite{Mazur:isogenies} has recently been vastly extended
\cite{biluP:rationalPointsArxiv}, but a proof in the remaining case --
to show that the image cannot be contained in the normalizer of a non-split Cartan -- is elusive and
inaccessible through refinements of Mazur's method.

Mazur's Program B \cite{mazur:rationalPointsOnModular} (given an open
subgroup $H \subset \GL_2(\Zhat)$, classify all elliptic curves $E/K$
such that the image of $\rho_{E} = \varprojlim_n \rho_{E,n}$ is
contained in $H$) suggests a more general uniformity conjecture -- one
expects that for every number field $K$, there exists a constant $B(K)$ such that for every elliptic curve $E/K$ without complex multiplication, the \emph{index} of $\rho_E(G_K)$ in
$\GL_2(\Zhat)$ is bounded by $B(K)$.
% \jeremy{This sentence doesn't make sense. Mazur's Program B doesn't mention
% the uniformity conjecture. Also I don't think a ``program'' is really a conjecture. What did you mean to say?}
% \david{Modified it slightly, what do you think?}

Computational evidence supports the uniformity conjecture -- for any
given $E$, \cite{zywina2011surjectivity} gives an algorithm
(implemented in Sage) to compute the set of primes $\ell$ such that
$\rho_{E,\ell}$ is not surjective, and verifies for non-CM $E$ with
$N_E \leq 350000$ that $\rho_{E,\ell}$ is surjective for $\ell >
37$.  Similarly, for small $\ell$ one can compute $\im \rho_{E,\ell}$
directly; \cite{sutherland:imageOfGaloisTalk}
has % verified this conjecture for $K = \Q$ and
computed $\im \rho_{E,\ell}$ for every elliptic curve in the Cremona
and Stein-Watkins databases for all primes $\ell < 80$. This is a
total of 139 million curves, and Sutherland's results are now listed
in Cremona's tables. In Appendix \ref{App:computing-Image}, we
describe a method using \cite{dokchitsers:frobenius} that can often
provably compute the mod $n$ image of Galois for any elliptic curve.

% \jeremy{Is the last sentence a fair classification of the
% Dokchitser's paper? There is no mention that one can do this in their paper, and their main theorem already assumes that they know the Galois group.}

% Also  \cite{zywina2011surjectivity}, \cite{zywina:openImageBounds}

Complementing this are various results (going as far back as Fricke,
possibly earlier; see \cite[Footnote
1]{mazur:rationalPointsOnModular}) computing equations for  the modular
curve $X_H$ parameterizing $E$ with $\rho_{E}(G_K) \subset H$
% for a subgroup $H \subset \GL_2(\Zhat)$
(see Section \ref{sec:modular-curves} for a
definition). For instance, \cite{brokerLS:modularViaVolcanoes} have
extended the range of $\ell$ such that one can compute the modular
polynomial $\Phi_{\ell}(X,Y)$ to $\ell \approx 10,000$ and Sutherland
now maintains tables of equations for modular curves (see
e.g.~\cite{sutherland:X1NTables}, \cite{sutherland:constructingPrescribedTorsion}). Recently \cite{dokchitsers:2adic}
(inspired by the earlier 3-adic analogue \cite{elkies:3adic}) computed
equations for the modular curves necessary to compute whether the mod
8, and thus the 2-adic, image of Galois is surjective (i.e.~equations for $X_H$ with reduction $H(8) \subset \GL_2(\Z/8\Z)$ a
maximal subgroup). (See Remark \ref{R:equationsAndRationalPoints} for more such examples.)

In many cases these equations have been used to compute the rational
points on the corresponding curves; see Remark
\ref{R:equationsAndRationalPoints} for some examples.  Applications
abound. In addition to verifying low level cases of known
classification theorems such as \cite{Mazur:isogenies} (in this spirit
we note the outstanding case of the ``cursed'' genus 3 curve
$X_{ns}^{+}(13)$ \cite[Remark 4.10]{biluP:rationalPointsArxiv},
\cite{baran2011exceptional}) and verifying special cases of the
uniformity problem, various authors have used the link between
integral points on modular curves and the class number one problem to
give new solutions to the class number one problem; see
\cite[A.5]{Serre1997}, and more recently
\cite{baran:normalizersClassNumber}, \cite{Baran:level9},
\cite{chen:classNumberOne}, \cite{schoofT:CNOneIntegral11}, and
\cite{kenku:classNumberOneOntegral7}.

% The above uniformity conjecture would imply the existence of a universal constant $N$ bounding the index of $\rho_{E,n}(G_{\Q})$; a ``vertical'' variant of the above problems is to assume the uniformity conjecture and begin computing this constant $N$.\\

%****************************************************************************
\subsection*{Main theorem}
%****************************************************************************

In the spirit of Mazur's `Program B', we consider a ``vertical'' variant of the uniformity problem. For any
prime $\ell$ and number field $K$, it follows from Falting's Theorem
and a short argument
 (e.g.~\cite[Theorem 1.2]{Arai:uniformLowerBounds} plus Goursat’s Lemma)
% (see \cite[Lemma 5.1]{zywina:openImageBounds})
that there is a bound $N_{\ell, K}$ on the index of the image of the
$\ell$-adic representation associated to any elliptic curve over
$K$. The uniformity conjecture implies that for $\ell > 37$,
$N_{\ell,\Q} = 1$, but $N_{\ell}$ can of course be larger for $\ell \leq
37$. Actually even more is true -- the uniformity conjecture would
imply the existence of a universal constant $N$ bounding the index of
$\rho_{E,n}(G_{\Q})$ for every $n$ (equivalently, bounding the index
of $\rho_{E}(G_{\Q}$); see \cite[Theorem 1.1]{zywina:openImageBounds}).
\\

In this spirit, we give a complete classification of the possible $2$-adic images of Galois representations associated to non-CM elliptic curves over $\Q$ and, in particular, compute $N_{2,\Q}$.

% Reworded according to the referee's suggestion.
\begin{theorem}
\label{T:mainTheorem}
Let $H \subseteq \GL_{2}(\Z_{2})$ be a subgroup, and $E$ be an elliptic curve whose $2$-adic image is contained in $H$. Then one of the following holds:
\begin{itemize}
\item The modular curve $X_{H}$ has infinitely many rational points.
\item The curve $E$ has complex multiplication.
\item The $j$-invariant of $E$ appears in the following table \ref{tab:mainTheorem} below.
\end{itemize}
\end{theorem}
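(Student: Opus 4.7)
The plan is to reduce the problem to a finite explicit classification and then settle each case in turn. First, I would enumerate, up to conjugacy, all open subgroups $H \subseteq \GL_{2}(\Z_{2})$ that can arise as the 2-adic image of Galois for some non-CM elliptic curve over $\Q$. Such an $H$ must satisfy $\det H = \Z_{2}^{\times}$ (from the cyclotomic character) and must contain an element conjugate to complex conjugation (an involution of determinant $-1$ and trace $0$). Because $H$ is open it is determined by its reduction modulo $2^{n}$ for some level $n$, so the enumeration is a finite computation at each level; I would process $n = 1, 2, 3, \ldots$ in turn, organizing the resulting subgroups into a rooted tree under containment with $\GL_{2}(\Z_{2})$ at the root.

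Next, for each $H$ I would compute the genus of the modular curve $X_{H}$ via the classical formula applied to $X_{H} \to X(1)$, using the index of $H$, the number of cusps, and the counts of elliptic points of orders $2$ and $3$. Call $H$ \emph{arithmetically maximal} if $X_{H}(\Q)$ is finite while $X_{H'}(\Q)$ is infinite for every $H' \supsetneq H$; in practice these are the groups where the genus first forces finiteness (together with a few low-genus cases in which $X_{H}(\Q)$ is finite for another arithmetic reason). The structural step is to prove that the set of arithmetically maximal $H$ is finite: descending into the pro-$2$ kernel of $\GL_{2}(\Z_{2}) \to \GL_{2}(\F_{2})$ forces the genus of $X_{H}$ to grow, so beyond some small level (an a priori calculation suggests level $2^{5} = 32$ suffices) every subgroup already contains an arithmetically maximal one above it.

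For each arithmetically maximal $X_{H}$ on this finite list I would produce explicit equations. The workhorse technique is to express functions on $X_{H}$ in terms of functions on an already-understood $X_{H'}$ with $H \subsetneq H'$, using $q$-expansions of modular forms of low weight on $\Gamma(2^{n})$---for instance Eisenstein series or carefully chosen Siegel units---and matching them against the image of $H$; when $X_{H}$ is hyperelliptic one can pin down a model from a degree-two function together with an invariant. With equations in hand I would determine $X_{H}(\Q)$ case by case: for genus $0$, by local obstructions or a rational parametrization; for genus $1$, by computing the Mordell--Weil group of the associated elliptic curve (which will turn out to have rank $0$ in the arithmetically maximal cases); and for genus $\ge 2$, by a combination of Chabauty--Coleman, the Mordell--Weil sieve, elliptic-curve Chabauty applied to a sub-cover, and \'etale descent. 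Finally, for each rational point produced I would compute its $j$-invariant, discard the cusps and the CM points, and collect what remains into the claimed table.

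The principal obstacle will be the equation-finding and rational-point step for the higher-genus $X_{H}$ arising at levels $16$ and $32$. Concretely, one expects a handful of curves of genus $\ge 3$ on which naive Chabauty--Coleman fails because the Mordell--Weil rank of the Jacobian meets or exceeds the genus, and on which even computing that rank is nontrivial; these will require ad hoc tricks such as descending to an intermediate modular curve whose rational points are controlled by an elliptic quotient, or carrying out an explicit $2$-descent on a factor of the Jacobian. Once each arithmetically maximal $X_{H}(\Q)$ has been exhausted, completeness of the table---the fact that every non-CM $j$-invariant forcing a non-surjective 2-adic image, beyond those accounted for by the infinite families of arithmetically non-maximal $H$, appears there---follows automatically from the tree structure.
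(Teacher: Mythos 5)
Your proposal follows essentially the same route as the paper: enumerate the arithmetically maximal open subgroups level by level (the paper bounds the descent via a Frattini-subgroup argument showing maximal subgroups of a group containing $\Gamma(2^k)$ contain $\Gamma(2^{k+1})$), compute models for the corresponding $X_H$ from $q$-expansions of weight-$2$ Eisenstein-type functions, and then resolve the rational points curve by curve using exactly the toolbox you list, including the ad hoc descents to elliptic quotients for the high-rank, high-genus cases. The only ingredient you do not mention is the separate treatment of subgroups with $-I \notin H$ (universal curves over $X_H$ and the twisting lemma), but since $X_H$ and $X_{\langle H,-I\rangle}$ share a coarse space, the $j$-invariant classification in the theorem as stated reduces to the $-I \in H$ case you cover.
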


\begin{figure}
\begin{tabular}{c|cc}
$j$-invariant & level of $H$ & Generators of image\\
\hline
$2^{11}$ & $16$ & 
\parbox[c][7ex]{30ex}{\centering\strut$\begin{bmatrix} 7 & 14 \\ 0 & 1 \end{bmatrix}$, $\begin{bmatrix} 1 & 5 \\ 6 & 11 \end{bmatrix}$, $\begin{bmatrix} 3 & 0 \\ 0 & 7 \end{bmatrix}$\strut} \\
$2^{4} \cdot 17^{3}$ & $16$ & 
\parbox[c][7ex]{30ex}{\centering\strut$\begin{bmatrix} 7 & 0 \\ 0 & 3 \end{bmatrix}$,$\begin{bmatrix} 3 & 5 \\ 14 & 7 \end{bmatrix}$, $\begin{bmatrix} 7 & 7 \\ 2 & 1 \end{bmatrix}$\strut}\\
$\frac{4097^{3}}{2^{4}}$ & $16$ & 
\parbox[c][7ex]{30ex}{\centering\strut$\begin{bmatrix} 3 & 5 \\ 6 & 3 \end{bmatrix}$, $\begin{bmatrix} 3 & 5 \\ 14 & 7 \end{bmatrix}$, $\begin{bmatrix} 7 & 7 \\ 2 & 1 \end{bmatrix}$\strut}\\
$\frac{257^{3}}{2^{8}}$ & $16$ & 
\parbox[c][7ex]{30ex}{\centering\strut$\begin{bmatrix} 7 & 14 \\ 0 & 1 \end{bmatrix}$,$\begin{bmatrix} 5 & 0 \\ 0 & 1 \end{bmatrix}$, $\begin{bmatrix} 1 & 5 \\ 6 & 3 \end{bmatrix}$\strut}\\
$-\frac{857985^{3}}{62^{8}}$ & $32$ & 
\parbox[c][7ex]{40ex}{\centering\strut$\begin{bmatrix} 25 & 18 \\ 2 & 7 \end{bmatrix}$,$\begin{bmatrix} 25 & 25 \\ 2 & 7 \end{bmatrix}$, $\begin{bmatrix} 1 & 0 \\ 8 & 1 \end{bmatrix}$,
$\begin{bmatrix} 25 & 11 \\ 2 & 7 \end{bmatrix}$\strut}\\
$\frac{919425^{3}}{496^{4}}$ & $32$ & 
\parbox[c][7ex]{40ex}{\centering\strut$\begin{bmatrix} 29 & 0 \\ 4 & 1 \end{bmatrix}$,$\begin{bmatrix} 31 & 27 \\ 0 & 1 \end{bmatrix}$, $\begin{bmatrix} 1 & 4 \\ 0 & 1 \end{bmatrix}$,
$\begin{bmatrix} 31 & 31 \\ 2 & 1 \end{bmatrix}$\strut}\\
$-\frac{3 \cdot 18249920^{3}}{17^{16}}$ & $16$ & 
\parbox[c][7ex]{30ex}{\centering\strut$\begin{bmatrix} 4 & 7 \\ 15 & 12 \end{bmatrix}$,$\begin{bmatrix} 7 & 14 \\ 7 & 9 \end{bmatrix}$, $\begin{bmatrix} 2 & 1 \\ 11 & 9 \end{bmatrix}$\strut}\\
$-\frac{7 \cdot 1723187806080^{3}}{79^{16}}$ & $16$ & 
\parbox[c][7ex]{30ex}{\centering\strut$\begin{bmatrix} 4 & 7 \\ 15 & 12 \end{bmatrix}$,$\begin{bmatrix} 7 & 14 \\ 7 & 9 \end{bmatrix}$, $\begin{bmatrix} 2 & 1 \\ 11 & 9 \end{bmatrix}$\strut}
\end{tabular}
  \caption{Exceptional j-invariants from Theorem \ref{T:mainTheorem} }
\label{tab:mainTheorem}
\end{figure}

\begin{remark}
The level of a subgroup $H$ is the smallest integer $2^{k}$ so that $H$ contains
all matrices $M \equiv I \pmod{2^{k}}$. Also, we consider action of the matrices in $\GL_{2}(\Z_{2})$
on the \emph{right}. That is, we represent elements of $E[2^{k}]$ as row vectors $\vec{x}$,
and the image of Galois on an element of $E[2^{k}]$ corresponds to $\vec{x} M$.
\end{remark}

\begin{corollary}
\label{C:mainCorollary}
Let $E$ be an elliptic curve over $\Q$ without complex multiplication. Then
  the index of $\rho_{E,2^{\infty}}(G_{\Q})$ divides $64$ or $96$; all such indices occur. Moreover, the image of $\rho_{E,2^{\infty}}(G_{\Q})$ is the inverse image in $\GL_{2}(\Z_{2})$ of the image of $\rho_{E,32}(G_{\Q})$. For non-CM elliptic curves $E/\Q$, there are precisely $1208$ possible images for $\rho_{E,2^{\infty}}$.
  \end{corollary}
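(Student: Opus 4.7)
The plan is to deduce Corollary \ref{C:mainCorollary} from Theorem \ref{T:mainTheorem} together with the explicit inventory of \emph{arithmetically maximal} open subgroups $H \subseteq \GL_2(\Z_2)$ --- those for which $X_H(\Q)$ is infinite --- that is built up in the body of the paper. By Theorem \ref{T:mainTheorem}, for any non-CM $E/\Q$ the $2$-adic image is either contained in one of these $H$'s, or $j(E)$ is one of the eight exceptional values in Table \ref{tab:mainTheorem}; in the latter case the image itself is listed. So once the inventory of arithmetically maximal $H$'s (with level, index, and a set of generators for each) is in hand, everything becomes a finite check.

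Concretely, the ``preimage-of-mod-$32$'' statement reduces to verifying that every arithmetically maximal $H$, and each of the eight sporadic images in Table \ref{tab:mainTheorem}, has level dividing $32$; this can be read directly off the tables produced to prove the main theorem, and suffices because an open subgroup of $\GL_2(\Z_2)$ of level $k$ equals the preimage of its reduction mod $k$ by definition. The divisibility of the index by $64$ or $96$ then follows by tabulating $[\GL_2(\Z_2):H]$ for all arithmetically maximal $H$ and all sporadic images, and observing the result. Realizing each divisor of $64$ or $96$ as an actual index is a matter of exhibiting witness curves: the algorithm of Appendix \ref{App:computing-Image}, run against the Cremona database, supplies them one by one.

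Finally, the count of $1208$ is obtained by enumerating all subgroups of $\GL_2(\Z/32\Z)$, up to conjugacy, whose preimage in $\GL_2(\Z_2)$ is contained in some arithmetically maximal $H$ from the classification (and which satisfy the necessary global constraints such as surjective determinant and containment of $-I$). I expect the main obstacle to be precisely this enumeration: the subgroup lattice of $\GL_2(\Z/32\Z)$ is very large, so the count must be carried out by computer, descending level by level through the lattices of subgroups of $\GL_2(\Z/2^k\Z)$ for $k \leq 5$ and pruning at each stage using the classification of arithmetically maximal subgroups to keep the tree of possibilities manageable. With the infrastructure already assembled to prove Theorem \ref{T:mainTheorem}, each pruning step is automatic, and the final tally of surviving conjugacy classes yields the asserted $1208$.
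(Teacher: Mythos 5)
Your overall strategy --- reduce everything to a finite check over the subgroup lattice produced in the course of proving Theorem \ref{T:mainTheorem} --- is the right one, and your treatment of the index statement and of the witnesses for ``all such indices occur'' is fine. But the criterion you propose for the count of $1208$ does not work. First, ``preimage contained in some arithmetically maximal $H$'' is not a pruning condition at all: by construction every open subgroup satisfying the necessary conditions (surjective determinant, an element of determinant $-1$ and trace zero) lies inside \emph{some} arithmetically maximal subgroup, since genus is non-increasing as one passes to supergroups. Second, imposing ``containment of $-I$'' excludes a large fraction of the genuine images --- the entire content of Section \ref{sec:universal-curve} is the classification of images with $-I \not\in H$ (e.g.\ $H_{57a}$), and these must be counted. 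Third, and most seriously, the correct criterion for a subgroup $G$ to appear in the count is that $G$ is \emph{actually realized} as $\rho_{E,2^{\infty}}(G_{\Q})$ for some non-CM $E/\Q$, and the paper's Section \ref{sec:curious-example} shows this is strictly stronger than ``$X_G(\Q)$ is infinite'': for $H_{155}$, whose modular curve is a positive-rank elliptic curve, every rational point lifts to one of four modular twists of an \'etale double cover, so the exact image is always a proper subgroup and $H_{155}$ never occurs. Your enumeration has no step that detects this failure of Hilbert irreducibility over a positive-rank elliptic base, so it would overcount. The correct procedure is: take the images attached to the finitely many sporadic points on the genus $\geq 2$ curves, together with those $G$ for which $X_G(\Q)$ is infinite \emph{and} for which one can certify (by Hilbert irreducibility when $X_G \cong \P^1$, and by explicitly ruling out the Section \ref{sec:curious-example} phenomenon when $X_G$ is a positive-rank elliptic curve) that some rational point has image exactly $G$ rather than a proper subgroup.

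A related but smaller issue affects the ``determined mod $32$'' claim. That claim is equivalent to the assertion that every \emph{possible image} has level dividing $32$, so the set over which you must verify the level is the set of realized images, not the $727$ arithmetically maximal subgroups: the enumeration of those in Section \ref{sec:group-theory} imposes $-I \in M$, whereas it is precisely the images without $-I$ (which contain $\Gamma(32)$ but not $\Gamma(16)$, such as $H_{57a}$) that push the level up to $32$. So the check must also range over the $1006$ index-two subgroups without $-I$ treated in Section \ref{sec:universal-curve}, or more simply over the final list of $1208$ images once that list has been correctly assembled.
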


\begin{remark}
\label{exceptionalpoints}
All indices dividing $96$ occur for infinitely many elliptic curves. For the first six $j$-invariants in the
table above, the index of the image is $96$, and for these, $-I \in H$ and this index occurs for all quadratic twists. Additionally, there are several subgroups $H$ with $-I \not \in H$ and $X_H \cong \P^1$, so that the there are infinitely many $j$-invariants such that the index is $96$.
Index 64 only occurs for the last two $j$-invariants in the above table,
which occur as the two non-cuspidal non-CM rational points on the genus 2 curve $X_{ns}^+(16)$ ($X_{441}$ on our list; see the analysis of Subsection \ref{ssec:rank2}), which classifies $E$ whose mod 16 image is contained in the normalizer of a non-split Cartan. (The second $j$-invariant was missed in \cite{baran:normalizersClassNumber}, because the map from $X_{ns}^{+}(16)$ to the $j$-line was not correctly computed. In this computation, Baran relied on earlier computations
of Heegner, and the error could be due to either of them.) The smallest conductor of an elliptic curve with this second $j$-invariant is $7^2\cdot79\cdot106123^2$ (which is greater than $4\cdot 10^{13}$).
\end{remark}

\begin{remark}
An application of the classification is an answer to the following
question of Stevenhagen: when can one have $\Q(E[2^{n+1}]) = \Q(E[2^{n}])$ for
a non-CM curve $E$? The answer is that if $n > 1$, $\Q(E[2^{n+1}])$ is larger
than $\Q(E[2^{n}])$. On the other hand, there is a one-parameter family
of curves for which $\Q(E[2]) = \Q(E[4])$. These are parametrized by
the modular curve $X_{20b}$, and one example is the curve
$E: y^{2} + xy + y = x^{3} - x^{2} + 4x - 1$.
\end{remark}

\begin{remark}
The classification above plays a role in Gonz\'alez-Jim\'enez and Lozano-Robledo's classification of all
cases in which $\Q(E[n])/\Q$ is an abelian extension of $\Q$. See \cite{GJLR}.
\end{remark}

  \begin{remark}[Failure of Hilbert irreducibility for a non-rational base]
    A surprising fact is that not every subgroup $H$ such that $X_H(\Q)$ is infinite occurs as the image of Galois of an elliptic curve over $\Q$; see Section \ref{sec:curious-example}.
  \end{remark}

  \begin{remark}[Related work]

In preparation by other authors is a related result  \cite{sutherlandZ:imageOfGalois} -- for every subgroup $H \subset \GL_2(\Z_{\ell})$ such that $-I \in H$, $\det(H) =\widehat{\Z}^{\times}$, and $X_H$ has
  genus 0, they compute equations for $X_H$, whether $X_H(\Q) = \emptyset$ and, if not, equations for the map $X_H \to X(1)$.
  \end{remark}

  \begin{remark}[Connection to arithmetic dynamics]

The image of the $2$-adic representation is connected with the following
problem in arithmetic dynamics. Given an elliptic $E/\Q$ and
a point $\alpha \in E(\Q)$ of infinite order, what is the density
of primes $p$ for which the order of the reduction $\tilde{\alpha} \in E(\F_{p})$ is odd?

In \cite{jonesRouse:galoisTheoryOf}, Rafe Jones and the first author study this
question, and show (see \cite{jonesRouse:galoisTheoryOf}*{Theorem 3.8}) that if for each $n$, $\beta_{n}$
is a chosen preimage of $\alpha$ with $2^{n} \beta_{n} = \alpha$ and
the fields $\Q(\beta_{n})$ and $\Q(E[2^{n}])$ are linearly disjoint
for all $n$, then this density is given by
\[
  \int_{\im \rho_{E,2^{\infty}}} |\det(M-I)|_{\ell} \, d\mu,
\]
an integral over the $2$-adic image. In the case that
$\rho_{E,2^{\infty}}$ is surjective, this density equals
$\frac{11}{21} \approx 0.5238$.  Our calculations show that for a
non-CM elliptic curve $E$, this generic density can be as large as
$\frac{121}{168} \approx 0.7202$ (corresponding to elliptic curves
with no rational $2$-torsion, square discriminant, whose mod $4$ image
does not contain $-I$, namely curves parametrized by $X_{2a}$), and
as small as $\frac{1}{28} \approx 0.0357$ which is attained for
several $2$-adic images, including elliptic curves whose torsion
subgroup is $\Z/2\Z \times \Z/8\Z$. The generic density is listed
on the summary page for each subgroup.
  \end{remark}
%****************************************************************************
%\subsection*{Proof of Theorem \ref{T:mainTheorem}}
%****************************************************************************
%I commented out the bold "Proof of Theorem 1.1 because there is also
%a proof of Theorem 1.1 lower down.

We now give a brief outline of the proof of Theorem~\ref{T:mainTheorem}.
For a subgroup $H$ of $\GL_2(\Z_2)$ of finite index, there is some $k$ such that $\Gamma(2^k) \subset H$. The non-cuspidal points of the modular curve $X_H := X(2^k) / H$ then roughly classify elliptic curves whose $2$-adic image of Galois is contained in $H$; see Section \ref{sec:modular-curves} for a more precise definition.\\

The idea of this paper is to find all of the rational points on the
``tower'' of 2-power level modular curves (see Figure \ref{fig:tower}).
We only consider subgroups $H$ such that  $H$ has
surjective determinant and contains an element with determinant $-1$
and trace zero (these conditions are necessary for $X_H(\Q)$  to be non-empty). In our proof, we will handle the case $-I \in H$ first; see Subsection \ref{ssec:background-universal-curves} for a discussion of $X_H$ and the distinction between the cases $-I \in H$ and $-I \not \in H$.

\begin{proof}[Proof of Theorem \ref{T:mainTheorem}]
  The proof naturally breaks into the following steps.
\begin{enumerate}
\item (Section \ref{sec:group-theory}.) First we compute a collection $\calC$ of open subgroups $H \subset \GL_2(\Z_2)$ such that every open $K \subset \GL_2(\Z_2)$ which satisfies the above necessary conditions and which is not in $\calC$ is contained in some $H \in \calC$ such that $X_H(\Q)$ is finite. (See Figure \ref{fig:tower} for those with $-I \in H$.)

% \item (Section \ref{sec:group-theory}.) First we compute a collection $\calC$ of open subgroups $H \subset \GL_2(\Z_2)$ containing every $H$ (satisfying the above necessary conditions) such that $X_H$ has genus 0 or 1 (see Figure \ref{fig:tower} for those with $-I \in H$) such that every open $K \subset \GL_2(\Z_2)$ which is not in $\calC$ is contained in some $H \in \calC$ such that $X_H(\Q)$ is finite
% (so each such of the latter $X_H$ either has genus at least 2, is an elliptic curve of rank 0, or is a pointless conic or a pointless genus 1 curve).
\item (Section \ref{sec:comp-equat-x_h}.) Next, we compute, for each $H \in \calC$ equations for (the coarse space of) $X_H$ and, for any $K$ such that $H \subset K$,  the corresponding map $X_H \to X_{K}$.
\item (Section \ref{sec:universal-curve}.) Then, for $H \in \calC$ such that $-I \not \in H$ we compute equations for the universal curve $E \to U$, where $U \subset X_H$ is the locus of points with $j \ne 0, 1728$ or $\infty$.
\item (Remainder of paper.)
 Finally, with the equations in hand, we determine $X_H(\Q)$ for each $H \in \calC$. The genus of $X_{H}$ can be as large as $7$.
\item (Appendix.) If we find a non-cuspidal, non-CM rational point on a curve
$X_{H}$ with genus $\geq 2$, we use computations of resolvent polynomials
(as described in \cite{dokchitsers:frobenius})
to prove that the $2$-adic image for the corresponding elliptic curve $E$
is $H$.
% We determine  determine if $\#X_H(\Q)$ is infinite if $g(X_H) \leq 1$; and finally
% provably compute $X_H(\Q)$ for each of the remaining curves.
\end{enumerate}
% (We have more conditions on the subgroup $H$. In particular, $H$ must have
% surjective determinant, and $H$ must contain an element with determinant $-1$
% and trace zero. In our proof, we'll handle the case that $-I \in H$ first.)\\
\end{proof}

\begin{remark}[\'Etale descent via group theory]
The analysis of rational points on the collection of $X_H$ involves a variety of techniques, including
local methods, Chabauty and elliptic curve Chabauty, and \'etale descent.

To determine the rational points on some of the genus 5 and 7 curves we invoke a particularly novel (and to our knowledge new) argument, combining \'etale descent with group theory. In short, some of the $X_H$ admit an \'etale double cover $Y \to X_H$ such that $Y$ is isomorphic to $X_{H'}$ for some subgroup $H'$ of $H$. More coincidentally, each of the twists $Y_d$ relevant to the \'etale descent are \emph{also} isomorphic to modular curves $X_{H'_d}$ for some group $H'_d$. And finally, each group $H'_d$ is a subgroup of some additional larger group $H''_d$ such that $X_{H''_d}$ is a curve with finitely many rational points we already understand (e.g.~a rank 0 elliptic curve), and the map $X_{H'_d} \to X_{H''_d}$ determines $X_{H'_d}(\Q)$ and thus, by \'etale descent $X_{H}(\Q)$. This method is applicable to 20 out of the 24 curves of genus greater than $3$ that we must consider. See Subsection \ref{ssec:etale-descent-via}.
\end{remark}

%****************************************************************************
\subsection*{Acknowledgements}
%****************************************************************************

We thank Jeff Achter, Nils Bruin, Tim Dokchitser, Bjorn Poonen, William Stein, Michael
Stoll, Drew Sutherland, and David Zywina for useful
conversations and University of Wisconsin-Madison's Spring 2011 CURL
(Collaborative undergraduate research Labs) students (Eugene Yoong,
Collin Smith, Dylan Blanchard) for doing initial group theoretical
computations. The second author is supported by an NSA Young
Investigator grant. We would also like to thank anonymous referees for helpful comments and suggestions
that have improved the paper.

%****************************************************************************
\section{The Modular curves $X_H$}
\label{sec:modular-curves}
%****************************************************************************

Given a basis $(P_1,P_2)$ of $E(\Qbar)[N]$ we identify $\psi \colon (\Z/N\Z)^2 \cong E(\Qbar)[N] $ via the map $\psi(e_i) = P_i$. This gives rise to \emph{two} isomorphisms $\iota_1,\iota_2\colon\Aut E(\Qbar)[N] \cong \GL_2(\Z/N\Z)$ (corresponding to a choice of left vs right actions) as follows: if $\phi\in\Aut E(\Qbar)[N]$ satisfies
\begin{align*}
 \phi(P_1)  = &\,  aP_1 + cP_2 \\
 \phi(P_2)  = &\, bP_1 + dP_2,
\end{align*}
then we define
\[
\iota_1(\phi) := \left[ \begin{matrix} a & b \\ c & d \end{matrix} \right]\, \mbox{and } \iota_2(\phi) := \left[ \begin{matrix} a & c \\ b & d \end{matrix} \right].
\]
These correspond respectively to left (via column vectors) and right (via row vectors) actions of $\GL_2(\Z/N\Z)$ on $(\Z/N\Z)^2$. Alternatively, $\iota_i(\phi)$ is defined by commutativity of the diagram
\[
\xymatrix{
(\Z/N\Z)^2 \ar[r]^{\psi} \ar[d]_{\iota_i(\phi)} & E[N] \ar[d]^{\phi} \\
(\Z/N\Z)^2 \ar[r]_{\psi} &E[N]
 }
\]
where we consider $\iota_i(\phi)$ acting on the left (via column vectors) for $i = 1$ and on the right (via row vectors) for $i = 2$.

Throughout this paper we use only right actions, and in particular define $\rho_{E,N}\colon G_K \to
\GL_2(\Z/N\Z)$ as  $\rho_{E,N}(\sigma) := \iota_2(\sigma)$ (this is consistent with, for instance, \cite{Shimura:automorphicFunctions}). Many sources are ambiguous about this choice, but the ambiguity usually does not
matter (see Remark \ref{R:left-vs-right}).

For an integer $N$, we define the modular curve $Y(N)/\Q$ to be the moduli space parameterizing pairs $(E/S, \iota)$, where $E$ is an elliptic curve over some base scheme $S/\Q$ and $\iota$ is an isomorphism $M_S := (\Z/N\Z)_{S}^2 \cong E[N]$, and define $X(N)$ to be its smooth compactification (see \cite[II]{DeligneRapoport} for a modular interpretation of the cusps). Note that $X(N)$ is not geometrically connected (and thus differs from the geometrically connected variant of \cite[Section 2]{mazur:rationalPointsOnModular} where $\iota$ is ``canonical'' in that it respects the Weil pairing), and that a matrix $A \in \GL_2(\Z/N\Z)$ acts on $X(N)$ (on the right) via precomposition with
\[
M \xrightarrow{A} M,\, \vec{v} \mapsto \vec{v}A
\]
so that $A\cdot (E,\iota) := (E,\iota\circ A)$.

Following \cite{DeligneRapoport}, for a subgroup $H$ of $\GL_2(\Zhat)$ and an integer $N$ such that $H$ contains the kernel of the reduction map $\GL_2(\Zhat) \to \GL_2(\Z/N\Z)$, we define $X_H$ to be the quotient of the modular curve  $X(N)$ by the image $H(N)$ of $H$ in $\GL_2(\Z/N\Z)$.  This quotient is independent of $N$, is geometrically connected if $\det(H) =\widehat{\Z}^{\times}$,  and roughly classifies elliptic curves whose adelic image of Galois is contained in $H$.  By the definition of $X_H$ as a quotient,   the non-cuspidal $K$-rational points of $X_H$ correspond to $G_K$-stable $H$-orbits of pairs $(E,\iota)$; we make the translation to the image of Galois more precise in the following lemma.

\begin{lemma} Let  $E$ be an elliptic curve over a number field $K$. Then there exists an $\iota$ such that $(E,\iota) \in X_H(K)$ if and only if $\im \rho_{E,n}$ is contained in a subgroup conjugate to  $H$.
\end{lemma}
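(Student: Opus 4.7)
The plan is to reduce immediately to finite level, then unwind the moduli description of $X_H$ as a quotient. Fix $N$ with $\ker(\GL_2(\widehat{\Z}) \to \GL_2(\Z/N\Z)) \subset H$, so that $X_H = X(N)/H(N)$. Because $H$ contains this kernel, $\im \rho_E$ lies in a $\GL_2(\widehat{\Z})$-conjugate of $H$ if and only if $\im \rho_{E,N}$ lies in a $\GL_2(\Z/N\Z)$-conjugate of $H(N)$; thus it suffices to prove the equivalence at level $N$.

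For the forward direction, I would take a $K$-point $P \in X_H(K)$ lifting to a non-cuspidal $\Kbar$-point $(E,\iota) \in Y(N)(\Kbar)$, with $E$ already defined over $K$ (which we may arrange since the $H(N)$-action fixes the underlying elliptic curve). The $K$-rationality of $P$ says that for every $\sigma \in G_K$, the pair $(E,\sigma\circ\iota) = {}^{\sigma}(E,\iota)$ lies in the same $H(N)$-orbit as $(E,\iota)$, i.e.\ there exists $A_\sigma \in H(N)$ with $\sigma\circ\iota = \iota\circ A_\sigma$. Rearranging, $\iota^{-1}\circ\sigma\circ\iota \in H(N)$ for all $\sigma$, which by the defining commutative diagram for $\iota_2$ at the start of Section \ref{sec:modular-curves} is exactly the statement that the mod-$N$ representation computed with respect to the basis $\iota$ takes values in $H(N)$. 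Since any two trivializations $\iota,\iota'$ differ by some $B \in \GL_2(\Z/N\Z)$ and this conjugates the resulting representation, the image of $\rho_{E,N}$ for any fixed basis is contained in some conjugate of $H(N)$.

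For the converse, suppose $g\,\im\rho_{E,N}\,g^{-1} \subset H(N)$ for some $g \in \GL_2(\Z/N\Z)$. Starting from any trivialization $\iota_0$ witnessing $\rho_{E,N}$ with respect to a chosen basis, I would replace $\iota_0$ by $\iota := \iota_0 \circ g^{-1}$ and run the above computation in reverse: for every $\sigma \in G_K$ the element $\iota^{-1}\sigma\iota$ lies in $H(N)$, so $\sigma\circ\iota = \iota\circ A_\sigma$ with $A_\sigma \in H(N)$; this is exactly the condition that the $H(N)$-orbit of $(E,\iota)$ is $G_K$-stable, i.e.\ that it descends to a $K$-point of $X_H$.

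The main technical obstacle is the usual subtlety between fine and coarse moduli: a priori a $K$-point of the quotient $X_H$ might be represented only by a pair $(E,\iota)$ defined over $\Kbar$, with $E$ possibly defined only up to $\Kbar$-isomorphism. This is where one must use that the $H(N)$-action does not move the underlying curve $E$, so the $G_K$-stability of the orbit already implies that $E$ has a model over $K$ (up to twists, which are absorbed by allowing any representative); for $j \ne 0,1728$ there are no extra automorphisms to worry about, and for the exceptional $j$-invariants (or when $-I\notin H$) the statement as phrased is about existence of some lifting $\iota$, which accommodates the automorphism group. Once this point is handled, the lemma follows from the bijective translation above between $G_K$-stable $H(N)$-orbits of bases and conjugacy classes containing $\im\rho_{E,N}$.
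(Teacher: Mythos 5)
Your proposal is correct and follows essentially the same route as the paper's proof: both identify the $K$-rationality of the $H$-orbit of $(E,\iota)$ with the condition $\sigma\circ\iota = \iota\circ A_\sigma$ for some $A_\sigma \in H$, and then observe (the paper by an explicit computation with the matrix entries $a,b,c,d$, you by writing $\iota^{-1}\circ\sigma\circ\iota \in H(N)$) that this is exactly the statement that $\rho_{E,N}$ computed in the basis $\iota(e_1),\iota(e_2)$ lands in $H$, with conjugacy absorbed by change of trivialization. Your added remarks on reduction to finite level and the fine-versus-coarse subtlety are reasonable packaging but not needed here, since the paper takes $E$ defined over $K$ as given and has already declared $K$-points of $X_H$ to be $G_K$-stable $H$-orbits of pairs $(E,\iota)$.
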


\begin{proof}

For  $\sigma \in G_K$ and $(E,\iota)  \in X_H(K)$, $\iota^{\sigma}$ is defined to be the composition $M_K \xrightarrow{\iota} E[N] \xrightarrow{\sigma} E[N]$.
If $(E,\iota)  \in X_H(K)$, then for every $\sigma \in G_K$, there is some $A \in H$ such that  $\iota^{\sigma} = \iota\circ A$.
Set $P_i := \iota(e_i)$ and suppose that $A = \left[ \begin{matrix} a & c \\ b & d \end{matrix} \right]$.
Then
\begin{align*}
(\iota \circ A)(e_1) =\iota(e_1A) = \iota(ae_1 + ce_2) & = aP_1 + cP_2 =  P_1^{\sigma} \\
(\iota \circ A)(e_2) =\iota(e_2A) = \iota(be_1 + de_2) & = bP_1 + dP_2 =  P_2^{\sigma},
\end{align*}
% and
% \[
% \left[ \begin{matrix} P_1^{\sigma}  & P_2^{\sigma}  \end{matrix} \right] =
% \left[ \begin{matrix} P_1  & P_2  \end{matrix} \right]\left[ \begin{matrix} a & c \\ b & d \end{matrix} \right]
% = \left[ \begin{matrix} P_1  & P_2  \end{matrix} \right]\rho_{E,N}(\sigma)
% \]
so $\rho_{E,N}(\sigma) = A$ and $\im \rho_{E,N} \subset H$ as claimed.

Conversely, let $P_1,P_2$ be a basis of $E(\overline{K})[N]$ such that $\im \rho_{E,N} \subset H$ with respect to this basis, and define $\iota$ by $\iota(e_i) := P_i$.
For $\sigma \in G_K$, $\rho_{E,N}(\sigma) = A$ where
\[ A := \left[ \begin{matrix} a & c \\ b & d \end{matrix}, \right]\]
and
\begin{align*}
P_1^{\sigma}  &:= aP_1 + cP_2   \\
P_2^{\sigma}  &:= bP_1 + dP_2.
% (\iota \circ A)(e_1) = \iota(ae_1 + ce_2) & = aP_1 + cP_2 =  P_1^{\sigma} \\
% (\iota \circ A)(e_2) = \iota(be_1 + de_2) & = bP_1 + dP_2 =  P_2^{\sigma}
\end{align*}
By assumption, $A \in H$; moreover
\begin{align*}
\iota^{\sigma}(e_1) = \iota(ae_1 + ce_2) & = aP_1 + cP_2  \\
\iota^\sigma(e_2) = \iota(be_1 + de_2) & = bP_1 + dP_2
\end{align*}
and so $\iota^{\sigma} = \iota\circ A$, which proves the converse.
\end{proof}

\begin{remark}
\label{R:left-vs-right}

 As discussed above, a choice of basis for $E[N]$ gives rise to \emph{two} isomorphisms $\iota_1,\iota_2\colon\Aut E[N] \cong \GL_2(\Z/N\Z)$, via column and row vectors. Given $K \subset \Aut E[N]$, the images $\iota_i(K)$ generally differ; in fact, $\iota_1(K) = \iota_2(K)^T$, where we define the \emph{transpose} $H^T := \{ A^T : A \in H\}$.
In the literature the choice of left or right action is often ambiguous, but usually does not matter: for many common $H$ (e.g.~the normalizer of a Cartan subgroup) $H$ is conjugate to $H^T$ and the modular curves $X_H$ and $X_{H^T}$ are thus isomorphic. This is an issue in this paper; if instead we use $\iota_1$, then $X_H$ parametrizes $E$ with image contained in $H^T$ rather than $H$, and in general $H^T$ and $H$ are not conjugate.
\end{remark}

In general  $X_H$ is a stack, and if $-I \in H$, then the stabilizer of every point contains $\Z/2\Z$. (Some, but not all, of the CM points with $j =0$ or $12^3$ will have larger stabilizers.) In contrast, when $-I \not \in H$, $X_H$ no longer has a generic stabilizer, but is generally still a stack since the CM points may have stabilizers. When $-I \in H$, quadratic twisting preserves the property that $\im \rho_{E,N} \subset H$; in contrast, when $-I \not \in H$, given a non-CM  elliptic curve $E/K$ such that $j(E)$ is in the image of the map $j\colon X_H(K) \to \P^{1}(K)$, there is a unique quadratic twist $E_d$ of $E$ such that $\im \rho_{E,N} \subset H$ (see Lemma \ref{L:uniqueness-of-twisting} below).
\vspace{3pt}

There exists a \defi{coarse space morphism}, i.e.~a morphism $\pi\colon X_H \to X$, where $X$ is a scheme, such the map $X_H(\Qbar) \to X(\Qbar)$ is a bijection, and any map from $X_H$ to a scheme uniquely factors through this morphism.
We compute equations for the coarse space of $X_H$ (and with no confusion will use the same notation $X_H$ for the coarse space). The coarse space has the following moduli interpretation --  given a number field $K$ and a $K$-point $t$ of the coarse space, there exists an elliptic curve with $j$-invariant $j(t)$ (where $j$ is the map $X \to X(1)$) satisfying $\im \rho_{E,N} \subset H$, and conversely, for any $E/K$ such that $\im \rho_{E,n} \subset H$, there exists a $K$-point $t$ of the coarse space of $X_H$ such that $j(t) = j(E)$.
\\

For more details see \cite[IV-3]{DeligneRapoport}; alternatively, for
a shorter discussion see \cite[Section
3]{baran:normalizersClassNumber}, \cite[A.5]{Serre1997}, or
\cite[Section 2]{mazur:rationalPointsOnModular}.

%****************************************************************************
\subsection{Universal curves}
\label{ssec:background-universal-curves}
%****************************************************************************

Suppose that $-I \not \in H$. Since we are not interested in the CM points anyway, we consider the complement $U \subset X_H$ of the cusps and preimages
on $X_{H}$ of $j = 0$ and $j = 1728$. Then $U$ is a scheme, so there exists a universal curve $\scrE \to U$; i.e.~a surface $\scrE$ with a map $\scrE \to U$ such that for every $t \in U(K)$, the fiber $\calE_t$ is an elliptic curve over $K$ without CM such that $\im \rho_{E,n} \subset H$, and conversely for any elliptic curve $E$ over a field $K$ such that $\im \rho_{E,n} \subset H$ there exists a (non-unique) $t \in U(K)$ such that the $E \cong \calE_t$.
\vspace{3pt}

In preparation for Section \ref{sec:universal-curve} (where we compute equations for $\scrE \to U$), we prove a preliminary lemma on the shape of the defining equations of $\scrE$.
\begin{lemma}
\label{L:universal-weierstrass-embedding}
Let $f \colon \scrE \to U$ be as above and assume that $U \subset \A^1$. Then there exists a closed immersion $\scrE \hookrightarrow \P^2_U$ given by a homogeneous polynomial
\[
Y^2Z - X^3 - aXZ^2 - bZ^3
\]
 where $a,b \in \Z[t]$.
\end{lemma}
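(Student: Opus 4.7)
The plan is to first produce a global short Weierstrass embedding of $\scrE$ into $\P^2_U$ with coefficients in $\OO_U$, and then to absorb denominators by rescaling via units, ending with coefficients in $\Z[t]$.

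Since $U \subset \A^1_{\Q}$ is open, $U = \Spec R$ with $R = \Q[t, g(t)^{-1}]$ for some nonzero $g \in \Q[t]$ cutting out $\A^1_{\Q} \setminus U$. As a localization of the PID $\Q[t]$, the ring $R$ is itself a principal ideal domain, so $\Pic(R) = 0$. The universal curve $f\colon \scrE \to U$ comes equipped with its zero section $O\colon U \to \scrE$, so the standard construction of a Weierstrass model applies: $f_*\OO_{\scrE}(3O)$ is a locally free $R$-module of rank $3$, hence free because $\Pic(R) = 0$, and choosing a basis $\{1, X, Y\}$ adapted to the filtration
\[
\OO_U = f_*\OO_{\scrE} \subset f_*\OO_{\scrE}(2O) \subset f_*\OO_{\scrE}(3O)
\]
yields a closed immersion $\scrE \hookrightarrow \P^2_U$ cut out by a general Weierstrass cubic. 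Since $R$ has characteristic $0$, completing the square in $Y$ and the cube in $X$ (both valid since $2, 3 \in \Q^\times \subset R^\times$) puts this cubic in short Weierstrass form
\[
Y^2 Z - X^3 - A X Z^2 - B Z^3 = 0, \qquad A, B \in R.
\]

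It remains to rescale so that $A, B \in \Z[t]$. For any unit $u \in R^\times$, the substitution $(X, Y, Z) \mapsto (u^{-2} X, u^{-3} Y, Z)$ followed by multiplying the defining equation through by $u^6$ gives another closed immersion $\scrE \hookrightarrow \P^2_U$ of the same elliptic curve, with the coefficients $(A, B)$ replaced by $(u^4 A, u^6 B)$. First take $u = g(t)^m \in R^\times$ with $m$ large enough to kill the poles of $A$ and $B$ along $V(g)$; the new coefficients then lie in $\Q[t]$. Next take $u = c \in \Q^\times \subset R^\times$ equal to a common denominator of the rational coefficients appearing in $u^4 A$ and $u^6 B$; the resulting coefficients lie in $\Z[t]$, producing the desired equation.

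The substantive input is the first step, namely the existence of a global short Weierstrass model, which here reduces to the triviality of $\Pic(R)$ for $R = \Q[t, g^{-1}]$. The remaining descent from $R$ to $\Z[t]$ is routine bookkeeping; the only mild verification is that each chosen $u$ is a genuine unit of $R$, which is automatic since $g^{-1} \in R$ and $\Q \subset R$.
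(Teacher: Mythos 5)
Your proof is correct and follows essentially the same route as the paper: both arguments hinge on the triviality of the rank-3 bundle $f_*\calO(3e(U))$ over $U \subset \A^1$ (you via $\Pic(R)=0$ for the localized PID $R$, the paper via ``$U$ has no non-trivial vector bundles''), followed by the standard Weierstrass construction and completion of the square and cube in characteristic $0$. Your explicit rescaling by units $u = g(t)^m$ and $u = c$ to land the coefficients in $\Z[t]$ is a welcome bit of bookkeeping that the paper's proof leaves implicit in the phrase ``can be simplified to short Weierstrass form as desired.''
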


\begin{proof}
  The identity section $e\colon U \to \scrE$ is a closed immersion whose image $e(U)$ is thus a divisor on $\scrE$ isomorphic to $U$. By Riemann-Roch, the fibers of the pushforward $f_*\calO(3e(U))$ are all 3-dimensional, so by the theorem on cohomology and base change $f_*\calO(3e(U))$ is a rank 3 vector bundle on $U$. Since $U \subset \A^1$, $U$ has no non-trivial vector bundles and so $f_*\calO(3e(U))$ is trivial. Let $\calO_U^{\oplus 3} \cong f_*\calO(3e(U))$ be a trivialization given by sections $1,x,y$, where 1 is the constant section 1 (given by adjunction), $x$ has order 2 along $e(U)$, and $y$ has order 3. These sections determine a surjection $f^*f_*\calO(3e(U)) \to \calO(3e(U))$ and thus a morphism $\calE \to \P^2_U$ which, since the fibers over $U$ are closed immersions, is also a closed immersion; $1,x,y$ satisfy a cubic equation (this is true over the generic point, so  true globally) and, since we are working in characteristic 0, can be simplified to short Weierstrass form as desired.
\end{proof}

%****************************************************************************
\section{Subgroups of $\GL_2(\Z_2)$}
\label{sec:group-theory}
%****************************************************************************

\begin{definition}
  Define a subgroup $H \subset \GL_2(\Z_2)$ to be \defi{arithmetically
    maximal} if
\begin{enumerate}
\item $\det \colon H \to \Z_{2}^{\times}$ is surjective,
\item there is an $M \in H$ with determinant $-1$ and trace zero, and
\item there is no subgroup $K$ satisfying (1) and (2) with $H
  \subseteq K$ so that $X_{K}$ has genus $\geq 2$.
\end{enumerate}
\end{definition}

If $E/\Q$ is an elliptic curve and $H = \rho_{E,2^{\infty}}(G_{\Q})$, then
the properties of the Weil pairing prove that $\det \colon H \to \Z_{2}^{\times}$
is surjective. Also, the image of complex conjugation in $H$ must be a matrix
$M$ with $M^{2} = I$ and $\det(M) = -1$. This implies that the trace of $M$
equals zero.

\begin{remark}
  After the subgroup and model computations were complete, David
  Zywina and Andrew Sutherland pointed out that if $E/\Q$ is an
  elliptic curve, complex conjugation fixes an element of $E[n]$. This
  gives further conditions on a matrix $M$ that could be the image of complex
conjugation, and rules out a handful of other subgroups.
\end{remark}

We enumerate all of the arithmetically maximal subgroups of
$\GL_{2}(\Z_{2})$ by initializing a queue containing only
$H = \GL_{2}(\Z_{2})$. We then remove a subgroup $H$ from the queue,
compute all of the open maximal subgroups $M \subseteq H$. We add $M$
to our list of potential subgroups if (i) $\det \colon M \to
\Z_{2}^{\times}$ is surjective, (ii) $-I \in M$, (iii) $M$ contains a
matrix with determinant $-1$ and trace zero, and (iv) if $M$ is not
conjugate in $\GL_{2}(\Z_{2})$ to a subgroup already in our list. If
the genus of $X_{M}$ is zero or one, we also add $M$ to the queue. We
proceed until the queue is empty.

To enumerate the maximal subgroups, we use the following results.
Recall that if $G$ is a profinite group, then $\Phi(G)$, the Frattini
subgroup of $G$, is the intersection of all open maximal subgroups of $G$.
Proposition 2.5.1(c) of \cite{wilson:profiniteGroups} states that
if $K \unlhd G$, $H \subseteq G$ and $K \subseteq \Phi(H)$, then
$K \subseteq \Phi(G)$. Applying this with $H = N \unlhd G$
and $K = \Phi(N)$, we see that $\Phi(N) \subseteq \Phi(G)$.

% \begin{lemma}
% Suppose that $\Gamma(2^{k}) \subseteq H \subseteq G$ and with
% $k \geq 2$. If $M$ is a maximal subgroup of $H$, then $\Gamma(2^{k+1}) \subseteq M$.
% \end{lemma}

%\david{We were using $M$ for the subgroup and the matrix. I changed it to $H'$}
\begin{lemma}
Suppose that $\Gamma(2^{k}) \subseteq H \subseteq G$ and
$k \geq 2$. If $K$ is a maximal subgroup of $H$, then $\Gamma(2^{k+1}) \subseteq K$.
\end{lemma}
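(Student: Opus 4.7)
The plan is to reduce the lemma, via the Frattini subgroup, to a direct matrix computation inside $\Gamma(2^{k})$. The starting observation is that for any open maximal subgroup $K$ of $H$ we have $\Phi(H) \subseteq K$ by the definition of the Frattini subgroup, so it suffices to prove $\Gamma(2^{k+1}) \subseteq \Phi(H)$. Since $\Gamma(2^{k})$ is normal in $\GL_{2}(\Z_{2})$, it is in particular normal in $H$, so the cited Proposition 2.5.1(c) of \cite{wilson:profiniteGroups} (applied with $H$ in the role of the ambient group and $\Gamma(2^{k})$ in the role of the normal subgroup) gives $\Phi(\Gamma(2^{k})) \subseteq \Phi(H)$. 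Thus the lemma reduces to the intrinsic statement
\[
  \Gamma(2^{k+1}) \;\subseteq\; \Phi(\Gamma(2^{k})), \qquad k \geq 2.
\]

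To establish this I would use that $P := \Gamma(2^{k})$ is a pro-$2$ group, so $\Phi(P)$ equals the closure of the subgroup of $P$ generated by squares and commutators; in fact it will be enough to produce each element of $\Gamma(2^{k+1})$ as a $2$-adic limit of products of squares from $P$. The one computation driving everything is
\[
(I + 2^{k} A)^{2} \;=\; I + 2^{k+1} A + 2^{2k} A^{2}, \qquad A \in M_{2}(\Z_{2}).
\]
The hypothesis $k \geq 2$ forces $2k \geq k+2$, so $2^{2k} A^{2} \in \Gamma(2^{k+2})$ and the induced squaring map $P/\Gamma(2^{k+1}) \to \Gamma(2^{k+1})/\Gamma(2^{k+2})$ is, upon identifying both quotients with the additive group $M_{2}(\F_{2})$, the identity map and in particular surjective.

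The argument then concludes by successive approximation. Given $g \in \Gamma(2^{k+1})$, write $g \equiv I + 2^{k+1} B_{0} \pmod{\Gamma(2^{k+2})}$ and set $h_{0} := I + 2^{k} B_{0} \in P$, so that $h_{0}^{-2} g \in \Gamma(2^{k+2})$. Repeating the construction with $k$ replaced by $k+1, k+2, \ldots$ produces elements $h_{i} \in \Gamma(2^{k+i}) \subseteq P$ such that the partial products $h_{0}^{2} h_{1}^{2} \cdots h_{N}^{2}$ converge $2$-adically to $g$, placing $g$ in the closure of the squares in $P$ and hence in $\Phi(P)$.

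The one delicate point — and the step I expect to be the main obstacle — is really just the inequality $2k \geq k+2$: for $k = 1$ the analogue of the squaring map becomes $A \mapsto A + A^{2}$ on $M_{2}(\F_{2})$, which is not surjective, and correspondingly the conclusion of the lemma genuinely fails at $k=1$. So the hypothesis $k \geq 2$ is forced into the argument at precisely this point, and the rest of the verification is mechanical.
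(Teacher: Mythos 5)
Your proposal is correct and follows essentially the same route as the paper's proof: reduce via $\Phi(H)\subseteq K$ and the cited Frattini proposition to showing $\Gamma(2^{k+1})\subseteq\Phi(\Gamma(2^{k}))$, then use the computation $(I+2^{k}A)^{2}\equiv I+2^{k+1}A\pmod{2^{k+2}}$ (valid exactly because $2k\geq k+2$ when $k\geq 2$) together with a limiting/quotient argument to realize elements of $\Gamma(2^{k+1})$ as limits of products of squares. Your successive-approximation write-up is a slightly more explicit version of the paper's "product of squares in every quotient" step, and your observation about the failure at $k=1$ correctly identifies where the hypothesis is used.
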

\begin{proof}
We have that $\Gamma(2^{k}) \unlhd H$ and by the above argument, we have
\[
  \Phi(\Gamma(2^{k})) \subseteq \Phi(H).
\]
Now, $\Gamma(2^{k})$ is a pro-2 group and this implies that every open maximal
subgroup of $\Gamma(2^{k})$ has index $2$. Hence,
\[
  \Phi(\Gamma(2^{k})) \supseteq \Gamma(2^{k})^{2}.
\]
If $g \in \Gamma(2^{k})$, $g = I + 2^{k} M$ for some $M \in M_{2}(\Z_{2})$. Then,
\[
  g^{2} = I + 2^{k+1} M + 2^{2k} M^{2} \equiv I + 2^{k+1} M \pmod{2^{k+2}}
\]
provided $k \geq 2$. Hence, the squaring map
gives a surjective homomorphism $\Gamma(2^{k})/\Gamma(2^{k+1}) \to \Gamma(2^{k+1})/\Gamma(2^{k+2})$ for all $k \geq 2$. It follows that an element in $\Gamma(2^{k+1})$ can
be written as a product of squares in every quotient $\Gamma(2^{k})/\Gamma(2^{n+k})$ and since the $\Gamma(2^{n+k})$ form a base for the open neighborhoods of the
identity in $G$, we have that $\Gamma(2^{k+1}) \subseteq \Phi(\Gamma(2^{k}))$.
This yields the desired result.
\end{proof}

The enumeration of the subgroups is accomplished using Magma. The initial
enumeration produces $1619$ conjugacy classes of subgroups. The computation
of the lattice of such subgroups finds that many of these are contained in
subgroups $H$ where the genus of $X_{H}$ is $\geq 2$. These are then removed,
resulting in $727$ arithmetically maximal subgroups. The arithmetically
maximal subgroups can have genus as large as $7$ and index as large as $192$.

\begin{figure}
  \includegraphics[width=\textwidth,height=\textheight,keepaspectratio]{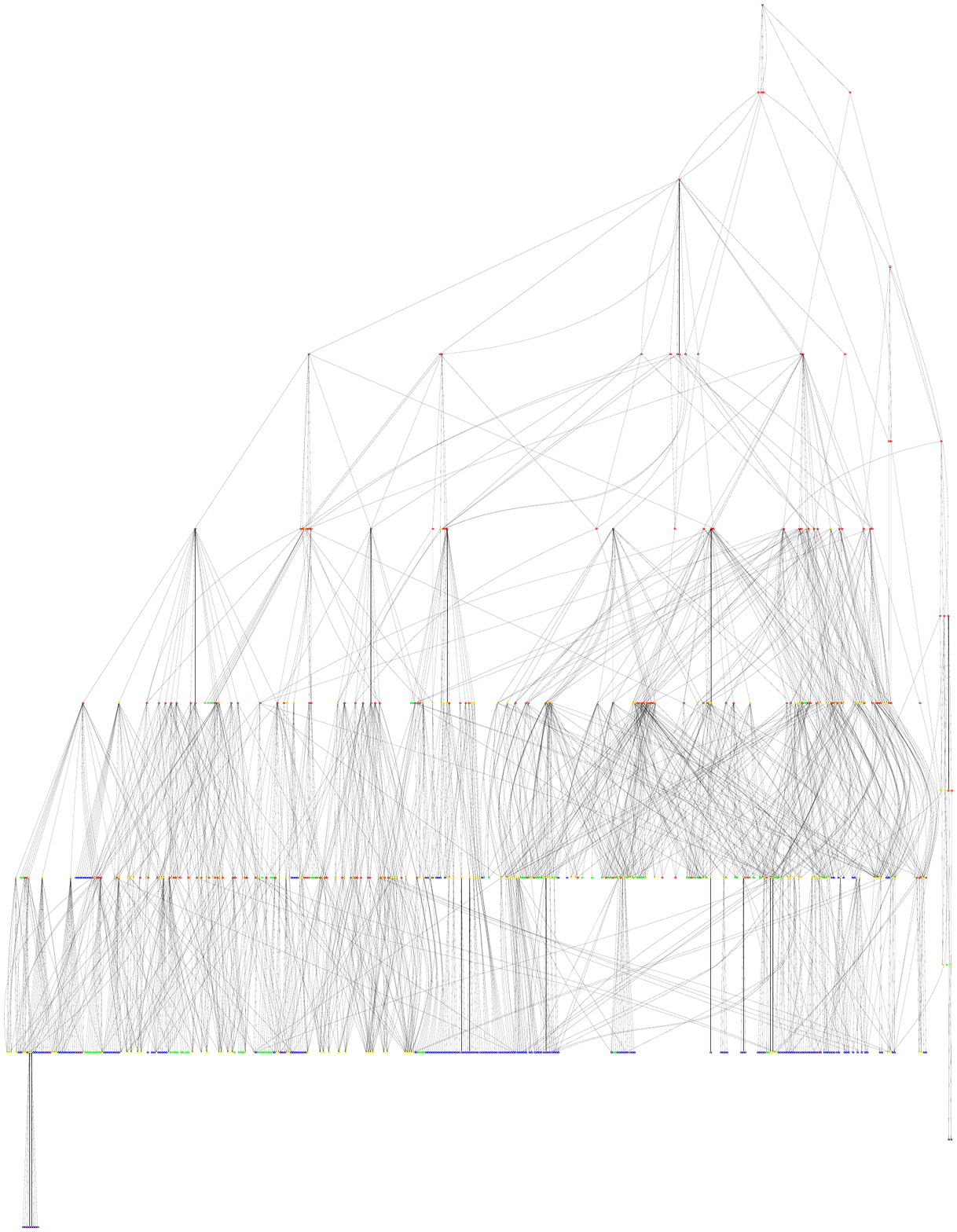}
%   \includegraphics[width=\textwidth,height=\textheight,keepaspectratio]{gl2graph.eps}
  % for my laptop doesn't recognize eps... Switch this out later.
  \caption{The tower of arithmetically maximal subgroups $H \subset \GL_2(\Z_2)$ with $-I \in H$.}
\label{fig:tower}
\end{figure}

% \jeremy{Should I add any more detail about how the computation is done?}
% I am now satisfied with this.
%****************************************************************************
\section{Computing equations for $X_H$ with $-I \in H$}
\label{sec:comp-equat-x_h}
%****************************************************************************

Here we discuss the computation of equations for $X_H$ as $H$ ranges over the arithmetically maximal subgroups of $\GL_2(\Z_2)$.

\begin{remark}
\label{R:equationsAndRationalPoints}
  Equations for some of these curves already appear in the literature; see
\cite{sutherland:X1NTables}, \cite{sutherland:constructingPrescribedTorsion},
\cite{GJG:modularGenus2},
\cite{heegner:diophantische},
\cite[Table 12.1]{knapp:ellipticCurvesBook},
\cite{shimura:equationsOfModularCurves},
\cite{dokchitsers:2adic},
\cite[Proof of Lemma 3.2]{momose:rationalSplit},
\cite{baran:normalizersClassNumber},
\cite{heegner:diophantische}
\cite{mcmurdy:tables},
\cite{zywina2011surjectivity}*{3.2}
 for equations of
$X_0(N)$ for $N=2,  4, 8, 16, 32, 64$, $X_1(N)$ for $N=2,  4, 8, 16$, $X_H$ with $H \subset \GL_2(\Z/8\Z)$ maximal, $X^+_{ns}(N)$ for $N=2,  4, 8,16$, and various other small genus modular curves.
% (Note also that $X_s(N) \cong X_0(N^2)$.)w

\end{remark}

We first assume that $-I \in H$. Let $H_{n}$ be the $n$th subgroup in our list of $727$
(as given in the file {\tt gl2data.txt}), and let $X_{n} = X_{H_{n}}$.
Instead of constructing the coverings $X_{n} \to X_{1}$ directly, we will instead
construct coverings $X_{n} \to X_{m}$ so that $H_{n}$ is a maximal subgroup of $H_{m}$ and compose to get $X_n \to X_1$. In almost all cases the degree of the covering $X_{n} \to X_{m}$ is $2$.
(The exceptions are $X_{6} \to X_{1}$, which has degree $3$,
and $X_{7} \to X_{1}$, $X_{55} \to X_{7}$, and $X_{441} \to X_{55}$ which all have
degree $4$. The curves $X_{1}$, $X_{7}$, $X_{55}$ and $X_{441}$ are
the curves $X_{ns}^{+}(2^{k})$ for $1 \leq k \leq 4$.)

In this process, if we find that $X_{n}$ is a pointless conic,
a pointless genus one curve, or an elliptic curve of rank zero, we do not
compute any further coverings of $X_{n}$. For this reason, it is only
necessary for us to compute models of $X_{n}$ for $345$ choices of $n$.

In Section 6.2 of \cite{Shimura:automorphicFunctions}, Shimura shows that
the field $L$ of modular functions on $X(N)$ whose Fourier coefficients
at the cusp at infinity are contained in $\Q(\zeta_{N})$ is generated by
\[
  f_{\vec{a}}(z) = \frac{9}{\pi^{2}} \frac{E_{4}(z) E_{6}(z)}{\Delta(z)} \wp_{z}\left(\frac{cz+d}{N}\right)
\]
where $\vec{a} = (c,d)$ and $(c,d) \in (\Z/N\Z)^{2}$ has order $N$. Here,
$\wp_{z}(\tau)$ is the classical Weierstrass $\wp$-function
attached to the lattice $\langle 1,z \rangle$.

Theorem 6.6 of \cite{Shimura:automorphicFunctions} shows that the
action of $\GL_{2}(\Z/N\Z)$ given by $f_{\vec{a}} | M = f_{\vec{a} M}$
uniquely extends to the entire field $L$ and is an automorphism of $L$
fixing $\Q(j)$. Moreover, $\Gal(K/\Q(j)) \cong \GL_{2}(\Z/N\Z) / \{ \pm I \}$,
and $\zeta_{N} | M = \zeta_{N}^{\det M}$. When $M \in \SL_{2}(\Z/N\Z)$,
the action of $M$ on $K$ is $\Q(\zeta_{N})$-linear and agrees with the usual
action: if $h \in L$ and $M = \begin{bmatrix} \alpha & \beta \\ \gamma & \delta \end{bmatrix} \in \SL_{2}(\Z/N\Z)$, then
\[
  (h | M)(z) = h\left(\frac{\alpha z + \beta}{\gamma z + \delta}\right).
\]

Given $H \subseteq \GL_{2}(\Z_{2})$ containing $\Gamma(2^{k})$,
we can think of $H$ as a subgroup of $\GL_{2}(\Z/2^{k} \Z)$ (by abuse of notation
also called $H$) using the isomorphism $\Z_{2}/2^{k} \Z_{2} \cong \Z/2^{k} \Z$.
Let $\tilde{H}$ be a subgroup of $\GL_{2}(\Z_{2})$
containing $H$ so that the covering $X_{H} \to X_{\tilde{H}}$ has minimal degree.
Our goal is to find an element $h \in L$ that generates the fixed field of
$H$ over $\Q(X_{\tilde{H}})$, and compute its images under representatives
for the right cosets of $H$ in $\tilde{H}$.

We consider the $\Q(\zeta_{2^{k}})$-subspace $V$ of $L$ spanned by the
functions $f_{\vec{a}}$. It is natural to seek a modular function $h$
in the subspace of $V$ fixed by $H$. However, this approach does not
always succeed.  The map $f_{\vec{a}} \to f_{\vec{a}} \cdot
\frac{\Delta(z)}{E_{4}(z) E_{6}(z)}$ is a bijection between $V$ and
the space of weight $2$ Eisenstein series for $\Gamma(2^{k})$ with
coefficients in $\Q(\zeta_{2^{k}})$ (see \cite{DiamondS:modularForms},
Section 4.6) and the dimension of the space of weight $2$ Eisenstein
series for $H$ is the number of cusps of $X_{H}$ minus one (see
equation (4.3) on page 111 of \cite{DiamondS:modularForms}). If there
is a subgroup $M$ with $H \subseteq M$ for which $X_{H}$ and $X_{M}$
have the same number of cusps, then $V^{H} = V^{M}$ and we will not
succeed in finding a primitive element for $\Q(X_{H})$. Instead, we
will find a subgroup $K \subseteq H$ so that $X_{K}$ has more cusps
than $X_{M}$ for any subgroup $M$ with $K \subseteq M \subseteq H$
(with $K \ne M$). The number of cusps a subgroup $K$ has is the number
of orbits $K \cap \SL_{2}(\Z/2^{k} \Z)$ has in its natural action on
$\mathbb{P}^{1}(\Z/2^{k} \Z)$. If $K \cap \SL_{2}(\Z/2^{k} \Z) =
\Gamma(2^{k})$, the action of $K$ on $\mathbb{P}^{1}(\Z/2^{k} \Z)$
will be trivial, and so $K$ will have more cusps than any larger
subgroup.

Once $K$ is selected, we compute $V^{K \cap \SL_{2}(\Z/2^{k} \Z)}$.
The sum $\sum_{\vec{a}} f_{\vec{a}} \cdot \frac{\Delta(z)}{E_{4}(z)
  E_{6}(z)}$ over all vectors $\vec{a}$ with order $2^{k}$ in
$(\Z/2^{k} \Z)^{2}$ is fixed by $\SL_{2}(\Z/2^{k} \Z)$ and is a
holomorphic modular form of weight $2$. Since there are no nonzero
weight $2$ modular forms for $\SL_{2}(\Z)$, $\sum_{\vec{a}}
f_{\vec{a}} = 0$.  However, as proved by Hecke in \cite{Hecke},
removing any one of these gives a linearly independent set. From this,
we know exactly how $\GL_{2}(\Z/2^{k} \Z)$ acts on the space $V$,
and we can compute subspaces fixed by various subgroups in terms of
a basis, and only compute Fourier expansions when needed.
We use this to compute $V^{K \cap \SL_{2}(\Z/2^{k} \Z)} = \langle w_{1}, w_{2}, \ldots, w_{d} \rangle$ by determining the $\Q(\zeta_{2^{k}})$-subspace of $V$ fixed by generators of $K \cap \SL_{2}(\Z/2^{k} \Z)$. Once this is computed, we determine
$V^{K} = \langle x_{1}, x_{2}, \ldots, x_{m} \rangle$ (a $\Q$-subspace of $V$)
by considering the action of generators of $K$ on $\zeta^{i} w_{j}$.
We select $x = \sum_{i=1}^{m} ix_{i}$ as a ``random'' element of $V^{K}$
and verify that the number of images of $x$ under the action of
$\tilde{H}$ is equal to $[\tilde{H} : K]$.

Finally, we compute the Fourier expansions of the $f_{\vec{a}}$ and
use these to compute the Fourier expansions of the images of $x$.
% If $g_{1}$, $g_{2}$, $\ldots$, $g_{r}$ are representatives for the
If $g_{1}, g_{2}, \ldots, g_{r}$ are representatives for the
right cosets of $K$ in $H$, we define
\[
  h = e_{s}(x | g_{1}, x | g_{2}, \ldots, x | g_{r}),
\]
where $e_{s}$ is the degree $s$ elementary symmetric polynomial in $r$
variables. We start with $s = 1$ and check if there are
$[\tilde{H} : H]$ images of $h$ under the action of the right cosets of $H$
in $\tilde{H}$. We increment $s$ until this occurs (and find that
in all cases we can take $s \leq 3$). We build the polynomial
\[
  F(t) = \prod_{g \in T} (t - h|g).
\]
Each of the coefficients of $F(t)$ is an element of $\Q(X_{\tilde{H}})$, which
can be recognized from their Fourier expansion. In the case that $X_{\tilde{H}}$ has genus one, we use the following result, whose proof is straightforward and we omit.

\begin{lemma}
\label{ellipticfunc}
Let $E \colon y^{2} + a_{1} xy + a_{3} y = x^{3} + a_{2} x^{2} + a_{4} x + a_{6}$ be an
elliptic curve and $g \colon E \to \P^{1}$ be a degree $k$ morphism. Then,
\[
  g = \frac{P(x) + y Q(x)}{R(x)}
\]
where $P$, $Q$ and $R$ are polynomials with $\deg P \leq 3k-3$,
$\deg Q \leq 3k-5$ and $\deg R \leq 3k-3$.
\end{lemma}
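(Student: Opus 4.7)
The plan is to write $g$ in the claimed shape via the ring structure of $\Q(E)$ and then bound the degrees by applying Riemann--Roch to a well-chosen multiple of $g$.

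First, the existence of an expression $g=(P(x)+yQ(x))/R(x)$ is immediate: $\Q(E)=\Q(x)[y]$ is free of rank two over $\Q(x)$ with basis $\{1,y\}$, so $g=A(x)+yB(x)$ for unique $A,B\in\Q(x)$, and clearing a common denominator gives the desired shape. All of the content is in the degree bounds.

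For the bounds, let $D=g^{*}(\infty)$ denote the polar divisor, $\deg D=k$, and set $r=\operatorname{ord}_{O}D$, so $D=rO+D_{f}$ with $D_{f}$ a finite divisor of degree $k-r$. Let $\iota$ denote the hyperelliptic involution and $\pi\colon E\to\P^{1}_{x}$ the quotient. Because $\iota$ fixes the Weierstrass points, the $\iota$-invariant divisor $D_{f}+\iota^{*}D_{f}$ has even multiplicity at every Weierstrass point, so it descends to an effective divisor $\bar D$ of degree $k-r$ on $\P^{1}_{x}$. Take $R(x)$ to be the monic polynomial of degree $k-r$ cutting out $\bar D$; viewed as a function on $E$, it has divisor $(D_{f}+\iota^{*}D_{f})-2(k-r)O$. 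Combining with $\operatorname{div}(g)=(g)_{0}-rO-D_{f}$ yields $\operatorname{div}(gR)=(g)_{0}+\iota^{*}D_{f}-(2k-r)O\geq -(2k-r)O$, so $gR\in L\bigl((2k-r)O\bigr)$.

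Finally, the standard Riemann--Roch basis of $L(nO)$ on an elliptic curve is $\{x^{i}:2i\leq n\}\cup\{x^{j}y:2j+3\leq n\}$; applied to $gR$ with $n=2k-r$, this expresses $gR=P(x)+yQ(x)$ with $\deg P\leq\lfloor(2k-r)/2\rfloor\leq k$ and $\deg Q\leq\lfloor(2k-r-3)/2\rfloor\leq k-2$, while $\deg R=k-r\leq k$. All three are in fact bounded by the looser quantities $3k-3,\,3k-5,\,3k-3$ whenever $k\geq 2$; the case $k=1$ is vacuous since a curve of genus $1$ admits no degree-one map to $\P^{1}$. The only delicate step in the argument is the descent of $D_{f}+\iota^{*}D_{f}$ to a divisor on $\P^{1}_{x}$—the even Weierstrass multiplicities are what make $\bar D$ a genuine integer divisor—but this is automatic from $\iota$-invariance.
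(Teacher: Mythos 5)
Your proof is correct. Note that the paper itself omits the proof of this lemma (it is stated "whose proof is straightforward and we omit"), so there is no argument in the paper to compare against; judged on its own, your write-up is sound. The decomposition $\Q(E)=\Q(x)\oplus\Q(x)y$, the descent of the $\iota$-symmetrized finite polar divisor $D_f+\iota^*D_f$ to an effective divisor on $\P^1_x$ (using that Weierstrass points acquire even multiplicity), and the identification $gR\in L((2k-r)O)$ with its standard basis $\{x^i:2i\le n\}\cup\{x^jy:2j+3\le n\}$ are all correct, and the degree count $\dim L(nO)=n$ confirms the basis. In fact your argument proves the sharper bounds $\deg P\le k$, $\deg Q\le k-2$, $\deg R\le k$, which imply the stated bounds $3k-3$, $3k-5$, $3k-3$ for every $k\ge 2$, with $k=1$ vacuous as you note; the weaker constants in the lemma presumably reflect a cruder intended argument (e.g.\ clearing denominators via the norm $g\cdot(g\circ\iota)$), so your route is, if anything, an improvement.
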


We then have explicitly
that $\Q(X_{H}) = \Q(X_{\tilde{H}})[t]/( F(t) )$. At this point we use
some straightforward techniques to simplify the model generated.

\begin{example}
We will consider the example of the covering $X_{57} \to X_{22}$. The subgroup
$H_{22}$ is an index $8$, level $8$ subgroup of $\GL_{2}(\Z_{2})$. It is one of three maximal
subgroups (up to $\GL_{2}(\Z_{2})$ conjugacy) of $H_{7}$, which is the unique
maximal subgroup of $\GL_{2}(\Z_{2})$ of index $4$. When the covering $X_{22} \to X_{7}$ was computed, we determined that $X_{22} \cong \P^{1}$ and we computed and
stored the Fourier expansion of a function $f_{22}$ with $\Q(X_{22}) = \Q(f_{22})$. The subgroup $H_{57}$ is an index $2$ subgroup of $H_{22}$, and
$H_{57} \supseteq \Gamma(16)$. It is generated by $\Gamma(16)$ and the matrices
\[
  \left[ \begin{matrix} 11 & 4 \\ 8 & 3 \end{matrix} \right],
  \left[ \begin{matrix} 15 & 11 \\ 0 & 1 \end{matrix} \right],
  \left[ \begin{matrix} 7 & 2 \\ 2 & 1 \end{matrix} \right], \text{ and }
  \left[ \begin{matrix} 15 & 15 \\ 1 & 0 \end{matrix} \right].
\]
Both $H_{22}$ and $H_{57}$ have two cusps. We choose $K$ to be the subgroup generated by $\Gamma(16)$ and the matrices
\[
  \left[ \begin{matrix} 13 & 2 \\ 14 & 11 \end{matrix} \right],
  \left[ \begin{matrix} 1 & 1 \\ 15 & 0 \end{matrix} \right], \text{ and }
  \left[ \begin{matrix} 1 & 0 \\ 7 & 7 \end{matrix} \right].
\]
We have $[H_{57} : K] = 4$. The modular curve $X_{K}$ has $8$ cusps. The
subspace $V$ of $\Q(X_{\Gamma(16)})$ is a $\Q(\zeta_{16})$-vector space
of dimension $95$ spanned by the $f_{\vec{a}}$,
where $\vec{a} = (c,d) \in (\Z/16\Z)^{2}$ and at least one of $c$ or $d$ is odd.
The subspace fixed by $K \cap \SL_{2}(\Z)$ has dimension $7$. Let $g_{1}, g_{2}, \ldots, g_{7}$ be a basis for this space. We consider the
$56$-dimensional $\Q$-vector space spanned by $\{ \zeta_{16}^{i} g_{j} : 0 \leq i \leq 7, 1 \leq j \leq 7 \}$ and we find the $7$-dimensional subspace fixed
by the action of $K$. We select a linear combination of these $7$ functions to
obtain a ``random'' modular function $x(z)$ fixed by $K$.

This $x(z)$ is still represented as a linear combination of the functions $f_{\vec{a}}$. We now compute the $q$-expansions of $x(z) | \gamma$, where $\gamma$
ranges over representatives of the $8$ right cosets of $K$ in $H_{22}$. We
partition these into two sets, 
\[
\{ x_{1}(z), x_{2}(z), x_{3}(z), x_{4}(z) \}
\text{ and } 
\{ x_{1}(z) | \delta, x_{2}(z) | \delta, x_{3}(z) | \delta, x_{4}(z) | \delta \}
\] where the $x_{i}(z)$ are the images of $x(z)$ under cosets of $K$ contained
in $H_{57}$, and $\delta \in H_{22}$ but $\delta \not\in H_{57}$.

We plug the $x_{i}(z)$ into the second elementary symmetric polynomial to
obtain a modular function $h(z)$ for $H_{57}$. Its image $h(z) | \delta$
under the action of $\delta$ is obtained from the $x_{i}(z) | \delta$.
Finally, a generator for $\Q(X_{57})/\Q(X_{22})$ is obtained as a root of
the polynomial
\[
  (x - h(z)) (x - h(z) | \delta).
\]
The function $f_{22}(z)$ with $\Q(X_{22}) = \Q(f_{22})$ has Fourier expansion
\[
  f_{22}(z) = 3 \sqrt{2} + (36 + 24 \sqrt{2})(1+i) q^{1/4} + (288 + 216 \sqrt{2})i q^{1/2} - (480 \sqrt{2} + 720)(1-i) q^{3/4} - 96 \sqrt{2} q + \cdots.
\]
The function $h(z) + h(z) | \delta$ has degree at most $3$,
and in fact we find that
\[
  h(z) + h(z) | \delta = \frac{2^{11} \cdot 3^{3} \cdot (155f_{22}^{2} - 5946f_{22} - 26784)}{f_{22}^{2} + 12f_{22} + 30}.
\]
Similarly, we find that
\[
  (h(z)) (h(z) | \delta) =   \frac{2^{20} \cdot 3^{6} \cdot (174569 f_{22}^{4} - 739788 f_{22}^{3} + 26364168 f_{22}^{2} + 298652832f_{22} + 680985144)}{(f_{22}^{2} + 12f_{22} + 30)^{2}}.
\]
These equations show that there is a modular function $g$ for $X_{57}$
so that $g^{2} = 18 - f_{22}^{2}$. This equation for $X_{57}$ is a conic. Finding
an isomorphism between this conic and $\P^{1}$ yields a function $f_{57}$
for which $\Q(X_{57}) = \Q(f_{57})$. This $f_{57}$ satisfies
\[
  f_{22} = \frac{3f_{57}^{2} + 6f_{57} - 3}{f_{57}^{2} + 1},
\]
which gives the covering map $X_{57} \to X_{22}$. The entire calculation
takes $26$ seconds on a 64-bit 3.2 GHz Intel Xeon W3565 processor.

Taking, for example,
$f_{57} = 0$ gives $f_{22} = -3$. Mapping from $X_{22} \to X_{7} \to X_{1}$ gives
$j = -320$. The smallest conductor elliptic curve with this $j$-invariant is
\[
  E \colon y^{2} = x^{3} - x^{2} - 3x + 7,
\]
and the $2$-adic image for this curve is $H_{57}$.
\end{example}

% The following table summarizes the results of the computation of the $727$
% models.

\vspace{3pt}
\begin{figure}
  \centering
\begin{tabular}{|l|l|}

\hline
Type & Number\\
\hline
$X_{H} \cong \P^{1}$ & $175$\\
Pointless conics & $10$\\
Elliptic curves with positive rank & $27$\\
Elliptic curves with rank zero & $25$\\
Genus $1$ curves computed with no points & $6$\\
Genus $1$ curves whose models are not necessary & $165$\\
\hline
Genus $2$ models computed & $57$\\
Genus $2$ curves whose models are not necessary & $40$\\
Genus $3$ models computed & $22$\\
Genus $3$ curves whose models are not necessary & $142$\\
Genus $5$ models computed & $20$\\
Genus $5$ curves whose models are not necessary & $24$\\
Genus $7$ models computed & $4$\\
Genus $7$ curves whose models are not necessary & $10$\\
\hline
\end{tabular}
  \caption{Summary of the computation of the 727 models. }
\label{tab:curves}
\end{figure}
% \cite{momose:rationalPointsOnXpluspr}

% \vspace{3pt}
% \begin{center}
%     \begin{tabular}{| c | c | c | c | c | c | c |}
%       \hline
%       Genus   & 0 &1 &2&3&5&7\\
%       \hline
%       Number & 175  & 52 & 57 & 18&20& 4\\
%       \hline\hline
% \multicolumn{7}{ |c| }{$\rk \Jac$ } \\
%        % $\rk \Jac$ &   &  & & && \\
%       \hline
%        0 &   & 25& 46& --&--& ?? \\
%        1 &   & 27& 3 & 9 &10& ?? \\
%        2 &   &   & 8 &  --& --& ?? \\
%        3 &   & &  & 9&    --& ?? \\
%        4 &   & &  & & -- & ?? \\
%        5 &   & &  & &    10& ?? \\
% \hline
%     \end{tabular}
%   \end{center}

%****************************************************************************
\section{The cases with $-I \not\in H$}
\label{sec:universal-curve}
%****************************************************************************

In this section we describe how to compute, for
subgroups such that $-I \not\in H$ and $g(X_H) = 0$, a family of curves $E_t$ over an open subset $U \subset
\P^1$ such that an elliptic curve $E/K$ without CM has 2-adic image of
Galois contained in a subgroup conjugate to $H$ if and only if there
exists $t \in U(K)$ such that $E_t \cong E$. \\

When $-I \in H$, the $2$-adic image
for $E$ is contained in $H$ if and only if the same is true of the
quadratic twists $E_{D}$ of $E$. For this reason, knowing equations for the covering
map $X_{H} \to X_{1}$ is sufficient to check whether a given elliptic curve has $2$-adic image contained in $H$.

When $-I \not\in H$,
more information is required. First, observe that if $-I \not\in H$,
then $\widetilde{H} = \langle -I, H \rangle$ is a subgroup with
$[\widetilde{H} : H] = 2$ that contains $H$. Recall that the coarse spaces of  $X_{H}$ and $X_{\widetilde{H}}$ are isomorphic.
In order for there to be
non-trivial rational points on $X_{H}$, it must be the case that
$X_{\widetilde{H}}(\Q)$ contains non-cuspidal, non-CM rational points. A
detailed inspection of the rational points in the cases that $-I \in
H$ shows that this only occurs if $X_{\widetilde{H}}$ has genus zero. There
are $1006$ subgroups $H$ that must be considered.

Since we are not interested in the cases of elliptic curves
with CM, we will remove the points of $X_{H}$ lying over $j = 0$
and $j = 1728$. Let $\pi \colon X_{H} \to \P^{1}$ be the map to the $j$-line
and $U = \pi^{-1}(\P^1 - \{0,12^3, \infty\}) \subset X_H$. Then points of $U$  have no non-trivial
automorphisms and as a consequence, $U$ is fine moduli space (see Section \ref{sec:modular-curves}).
We let $E_{H} \to U$ denote the universal family of (non-CM) elliptic curves with
$2$-adic image contained in $H$. By Lemma \ref{L:universal-weierstrass-embedding} there is a model for $E_{H}$ of the form
\[
  E_{H} \colon y^{2} = x^{3} + A(t) x + B(t)
\]
where $A(t), B(t) \in \Z[t]$. % Here $\Q(X_{H}) = \Q(X_{\widetilde{H}}) = \Q(t)$.
Knowing that such a model exists, we will now describe how to find it.
% Our next result requires some background.
\vspace{6pt}

Let $K$ be any field of characteristic zero. Suppose that $E/K$ is an
elliptic curve corresponding to a rational point on $X_H$  with $j(E)
\not\in \{0, 1728\}$ and given by
\[
  E \colon y^{2} = x^{3} + Ax + B.
\]
Now, if
\[
  E_{d} \colon dy^{2} = x^{3} + Ax + B
\]
is a quadratic twist of $E$, then $E$ and $E_{d}$ are isomorphic over
$K(\sqrt{d})$ with the isomorphism sending $(x,y) \mapsto (x,y/\sqrt{d})$.
Fix a basis for the $2$-power torsion points on $E$ and let
 $\rho_{E} \colon \Gal(\overline{K}/K) \to \GL_{2}(\Z_{2})$ be the corresponding Galois representation.
Taking the image of the fixed basis on $E$ under this isomorphism gives
a basis on $E_{d}$, and with this choice of basis, we have
\[
  \rho_{E_{d}} = \rho_{E} \cdot \chi_{d}
\]
where $\chi_{d}$ is the natural isomorphism
$\Gal(K(\sqrt{d})/K) \to \{ \pm I \}$. We can now state our next result.

\begin{lemma}
\label{L:uniqueness-of-twisting}
Assume the notation above. Let $\widetilde{H}$ be the subgroup generated by
the image of $\rho$ and $-I$.  Suppose $H \subset \widetilde{H}$ is a subgroup
of index $2$ with $-I \not\in H$. Then there is a unique quadratic twist
$E_{d}$ so that the image of $\rho_{E_{d}}$ (computed with respect to
the fixed basis coming from $E$) lies in $H$.
\end{lemma}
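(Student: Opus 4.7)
The plan is to construct the required $d$ as the quadratic character of $G_K$ obtained by composing $\rho_E$ with the quotient $\widetilde{H} \to \widetilde{H}/H$.

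First, since $[\widetilde{H} : H] = 2$ and $-I \in \widetilde{H} \setminus H$, the coset $(-I)H$ is the unique non-trivial coset and the quotient map $\varphi \colon \widetilde{H} \to \widetilde{H}/H$ may be identified with a surjective homomorphism $\varphi \colon \widetilde{H} \to \{\pm 1\}$ sending $-I$ to $-1$. Concretely, for $M \in \widetilde{H}$, $\varphi(M) = +1$ if $M \in H$ and $\varphi(M) = -1$ if $M \in -H$. Composing with $\rho_E$ gives a continuous homomorphism
\[
\psi := \varphi \circ \rho_E \colon G_K \longrightarrow \{\pm 1\}.
\]
By Kummer theory, there is a unique $d \in K^\times/(K^\times)^2$ such that $\psi = \chi_d$ (allowing $d = 1$, corresponding to the trivial character).

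Next, I would verify that this $d$ satisfies $\im \rho_{E_d} \subseteq H$. Using the relation $\rho_{E_d}(\sigma) = \chi_d(\sigma)\, \rho_E(\sigma)$ recalled just before the lemma, there are two cases: if $\sigma \in \ker \psi$, then $\rho_E(\sigma) \in H$ and $\chi_d(\sigma) = +1$, so $\rho_{E_d}(\sigma) = \rho_E(\sigma) \in H$; otherwise $\rho_E(\sigma) \in -H$ and $\chi_d(\sigma) = -1$, so $\rho_{E_d}(\sigma) = -\rho_E(\sigma) \in H$. Hence $\im \rho_{E_d} \subseteq H$ in either case.

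For uniqueness, suppose $d' \in K^\times/(K^\times)^2$ is another representative with $\im \rho_{E_{d'}} \subseteq H$. Then for every $\sigma \in G_K$ both $\chi_d(\sigma)\rho_E(\sigma)$ and $\chi_{d'}(\sigma)\rho_E(\sigma)$ lie in $H$, so their ratio $\chi_d(\sigma)\chi_{d'}(\sigma)\, I = \chi_{dd'}(\sigma) I$ lies in $H$. Since $-I \notin H$, this forces $\chi_{dd'}(\sigma) = +1$ for all $\sigma$, i.e., $\chi_d = \chi_{d'}$, and hence $d = d'$ in $K^\times/(K^\times)^2$. The main conceptual point — and the only thing that really uses the hypothesis $-I \notin H$ — is exactly this last step, which simultaneously forces $\psi$ to be well-defined (via $-I \notin H$ ensuring the quotient is cut out by the sign of $-I$) and pins down $d$ uniquely.
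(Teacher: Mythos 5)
Your proof is correct and is essentially the paper's argument in dual form: the paper defines the fixed field $L$ of $\rho_E^{-1}(H)$ and shows $\im\rho_{E_d}\subseteq H$ iff $L=K(\sqrt{d})$, which is exactly your statement that $\psi=\varphi\circ\rho_E$ equals $\chi_d$ for a unique square class $d$. The only cosmetic difference is that the paper explicitly invokes $j(E)\notin\{0,1728\}$ to identify twists with square classes, a point you use implicitly.
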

\begin{remark}
  Without the chosen basis for the $2$-power torsion on $E_{d}$, the
  statement is false. Indeed, it is possible for two
  different index two (and hence normal) subgroups $N_{1}$ and $N_{2}$
  of $\widetilde{H}$ to be conjugate in $\GL_{2}(\Z_{2})$. The choice
  of a different basis for the $2$-power torsion on $E_{d}$ would
  allow the image of $\rho_{E_{d}}$ to be either $N_{1}$ or $N_{2}$.
\end{remark}
\begin{proof}
Observe that $j(E) \not\in \{0, 1728\}$ implies that $E \cong E_{d}$ if and
only if $d \in (K^{\times})^{2}$. Recall that
$\rho_{E_{d}} = \rho_{E} \cdot \chi_{d}$.

Let $L$ be the fixed field of $\{ \sigma \in \Gal(\overline{K}/K) : \rho_{E}(\sigma) \in H \}$. Then since $H$ is a subgroup of $\widetilde{H}$ of index at most
$2$, $[L : K] \leq 2$. If $\rho_{E}(\sigma) \not\in H$,
then $\rho_{E}(\sigma) \in (-I) H$. Thus, the image of $\rho_{E_{d}}$ is
contained in $H$ if and only if $\chi_{d}(\sigma) = -1 \iff \sigma
\not\in \Gal(\overline{K}/L)$. Thus, the image of $\rho_{E_{d}}$ is contained
in $H$ if and only if $L = K(\sqrt{d})$. This proves the claim.
\end{proof}

We start by constructing a model for an elliptic curve
\[
  E_{t} \colon y^{2} = x^{3} + A(t) x + B(t)
\]
where $A(t), B(t) \in \Z[t]$ and $j(E_{t}) = p(t)$, where $p \colon X_{\widetilde{H}}
\to X_{1}$ is the covering map from $X_{\widetilde{H}}$ to the $j$-line. By the
above lemma, the desired model of $E_{H}$ will be a quadratic twist of
$E_{t}$, so
\[
  E_{H} \colon y^{2} = x^{3} + A(t) f(t)^{2} + B(t) f(t)^{3}
\]
for some squarefree polynomial $F(t) \in \Z[t]$. (Here, we say that a polynomial $F(t) \in \Z[t]$ is squarefree if whenever $F(t) = g(t)^2h(t)$ with $g,h \in \Z[t]$, then $g = \pm 1$.)

\begin{lemma}
\label{L:square-free-division-check}
  Let $F(t) \in \Z[t]$ be squarefree and let $D(t) \in \Z[t]$. Suppose that for all but finitely many $n \in \Z$, the squarefree part of $f(n)$ divides $D(n)$. Then $F(t)$ divides $D(t)$ in $\Q[t]$.
\end{lemma}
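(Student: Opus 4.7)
The plan is to factor $F$ into irreducibles and reduce to showing each factor divides $D$ in $\Q[t]$, then derive a contradiction from a carefully chosen prime. Write $F = c \cdot F_1 \cdots F_k$ in $\Z[t]$ with $c \in \Z$ and the $F_i$ primitive irreducibles of positive degree; the squarefreeness hypothesis forces $c$ to be a squarefree integer and the $F_i$ to be pairwise distinct (else we could extract a square $d^2$ from $c$ or a square $F_i^2$ from the product). Since $c$ is a unit and the $F_i$ are pairwise coprime in $\Q[t]$, it suffices to show $F_i \mid D$ in $\Q[t]$ for each $i$. Suppose for contradiction that $F_i \nmid D$ for some $i$; then $\gcd(F_i, D) = 1$ in $\Q[t]$ (as $F_i$ is $\Q$-irreducible) and so $\mathrm{Res}(F_i, D) \ne 0$, and likewise $\mathrm{disc}(F_i) \ne 0$ and $\mathrm{Res}(F_i, F_j) \ne 0$ for $j \ne i$. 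Define the nonzero integer
\[
  R := c \cdot \mathrm{lc}(F_i) \cdot \mathrm{disc}(F_i) \cdot \mathrm{Res}(F_i, D) \cdot \prod_{j \ne i} \mathrm{Res}(F_i, F_j).
\]

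By the classical fact that any non-constant polynomial in $\Z[t]$ has roots modulo infinitely many primes, choose a prime $p \nmid R$ and an $n_0 \in \Z$ with $F_i(n_0) \equiv 0 \pmod{p}$. Since $p \nmid \mathrm{lc}(F_i) \cdot \mathrm{disc}(F_i)$, we have $F_i'(n_0) \not\equiv 0 \pmod{p}$, and the expansion $F_i(n_0 + pk) \equiv F_i(n_0) + pk\,F_i'(n_0) \pmod{p^2}$ shows that $p \,\|\, F_i(n_0 + pk)$ for all but one residue class of $k \pmod{p}$. Fix such a class; then $n := n_0 + pk$ can be taken arbitrarily large. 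For any such $n$, the condition $p \nmid R$ forces $p \nmid c$, $p \nmid F_j(n)$ for $j \ne i$ (else $p \mid \mathrm{Res}(F_i, F_j)$), and $p \nmid D(n)$ (else $p \mid \mathrm{Res}(F_i, D)$). Combined with $p \,\|\, F_i(n)$, this yields $p \,\|\, F(n)$, so $p$ divides the squarefree part of $F(n)$ while $p \nmid D(n)$. This produces infinitely many $n$ of arbitrarily large magnitude at which the squarefree part of $F(n)$ fails to divide $D(n)$, contradicting the hypothesis.

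The main obstacle is the bookkeeping needed to isolate $F_i$ at a single prime $p$: we must simultaneously control the $p$-adic valuations of $F_i(n)$, of each other factor $F_j(n)$, of $c$, and of $D(n)$. Packaging all of these obstructions into a single nonzero integer $R$ and restricting to primes $p \nmid R$ handles everything uniformly, after which a standard Taylor expansion modulo $p^2$ delivers the integer $n$ at which $p$ exactly divides $F_i(n)$.
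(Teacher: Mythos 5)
Your proof is correct, but it takes a genuinely different route from the paper's. The paper argues globally and by induction on $\deg D$: it writes $MD(t) = q(t)F(t) + r(t)$ with $\deg r < \deg F$, observes that the squarefree part of $F(n)$ must then divide $r(n)$ for almost all $n$, and kills the remainder using the fact (via the Chinese Remainder Theorem) that the squarefree part of $F(n)$ takes infinitely many distinct values and so cannot divide a quantity of bounded size; the only tools are the division algorithm and archimedean size considerations. You instead argue locally at a well-chosen prime: you factor $F$ into irreducibles, and for an irreducible factor $F_i$ allegedly coprime to $D$ you package every possible obstruction ($c$, $\mathrm{lc}(F_i)$, $\mathrm{disc}(F_i)$, $\mathrm{Res}(F_i,D)$, and the $\mathrm{Res}(F_i,F_j)$) into a single nonzero integer $R$, choose a prime $p\nmid R$ at which $F_i$ has a root, and use the expansion of $F_i(n_0+pk)$ modulo $p^2$ to produce infinitely many $n$ with $p$ exactly dividing $F(n)$ while $p\nmid D(n)$. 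Your version costs more machinery (factorization into irreducibles, resultants and discriminants, and the Schur-type fact that a nonconstant integer polynomial has roots modulo infinitely many primes), but each divisibility is then controlled at one explicit prime, which makes the argument arguably tighter than the paper's: there the induction is on $\deg D$ while the division algorithm only guarantees $\deg r < \deg F$, and the size comparison between the squarefree part of $F(n)$ and $|r(n)|$ is left somewhat implicit. Both arguments are valid; yours has the additional feature of exhibiting explicit primes that witness the failure of divisibility.
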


\begin{proof}

%First we note that, by the Chinese Remainder Theorem, as $s$ ranges over $S$, the squarefree part of $F(n)$ takes infinitely many distinct values.

% Here is a proof of this fact.

By the Chinese Remainder Theorem, the squarefree part of $F(n)$ takes infinitely many distinct values.
% Quick proof of this.
%
%
%
%
We proceed by induction on $\deg D(t)$. If $D(t)$ is constant then the statement is trivial, since we can choose $n$ such that the squarefree part of $F(n)$ has absolute value larger than $|D(n)|$, giving a contradiction unless $F(t)$ is also constant. In general, by the division algorithm we can write
\[
MD(t) = q(t)F(t) + r(t)
\]
for some $M \in \Z$ and $q(t),r(t) \in \Z[t]$ such that $\deg r(t) < \deg F(t)$. But then there are infinitely many $n$ such that the squarefree part of $F(n)$ divides $r(n)$; by induction, this is a contradiction unless $r(t)$ is identically zero, in which case we have
\[
MD(t) = q(t)F(t),
\]
completing the proof.

\end{proof}

\begin{theorem}
\label{discthmIntegral}
Let $F(t) \in \Z[t]$ be squarefree and such that $E_H$ is isomorphic to the twist $E_{t,F(t)}$ of $E_{t}$ by $F(t)$  and let $D(t)$ be the discriminant of the model $E_{t}$ given above.
Then $F(t) | D(t)$ in $\Q[t]$.
\end{theorem}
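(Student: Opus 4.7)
The plan is to apply Lemma~\ref{L:square-free-division-check} with $D(t)$ replaced by $2D(t)$: I will show that for all but finitely many $n \in \Z$, the squarefree part of $F(n)$ divides $2D(n)$, which yields $F(t) \mid 2D(t)$ in $\Q[t]$ and hence $F(t) \mid D(t)$ in $\Q[t]$. It suffices to consider $n \in \Z$ such that $n \in U(\Q)$ and $D(n) \neq 0$, which excludes only finitely many integers.

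For such $n$, the defining property of the universal family $E_H \to U$ gives that $E_{n, F(n)}\colon y^2 = x^3 + A(n) F(n)^2 x + B(n) F(n)^3$ is an elliptic curve over $\Q$ whose $2$-adic Galois image is contained in $H$. Since $-I \in \widetilde{H}$ and $j(E_n) \notin \{0, 12^3\}$, the curve $E_n$ itself has $2$-adic image contained in $\widetilde{H} = \langle H, -I \rangle$: by the defining property of $X_{\widetilde{H}}$ as a coarse space some quadratic twist of $E_n$ has this property, and since $-I \in \widetilde{H}$ all quadratic twists do. The twisting identity $\rho_{E_{n, F(n)}} = \rho_{E_n} \cdot \chi_{F(n)}$ from the proof of Lemma~\ref{L:uniqueness-of-twisting} then forces $\chi_{F(n)} = \psi_n$, where $\chi_{F(n)}\colon G_{\Q} \to \{\pm 1\}$ is the quadratic character attached to $\Q(\sqrt{F(n)})$ and $\psi_n \colon G_{\Q} \to \widetilde{H}/H \cong \{\pm 1\}$ is obtained by composing $\rho_{E_n}$ with the quotient map.

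Since $\psi_n$ factors through $\rho_{E_n, 2^{\infty}}$, it is unramified at every prime where the $2$-adic representation is. By N\'eron--Ogg--Shafarevich (applied with auxiliary prime $\ell = 2$), $\rho_{E_n, 2^{\infty}}$ is unramified at every odd prime of good reduction of $E_n$, which includes every odd prime not dividing $D(n)$. Thus $\chi_{F(n)}$ is unramified outside $2$ and the odd prime divisors of $D(n)$. Since a quadratic character $\chi_d$ is ramified at an odd prime $p$ precisely when $p$ divides the squarefree part of $d$, every odd prime divisor of the squarefree part of $F(n)$ divides $D(n)$. The squarefree part of $F(n)$ contributes at most a single factor of $2$, so it divides $2D(n)$, and this verifies the hypothesis of Lemma~\ref{L:square-free-division-check} with $2D(t)$ in place of $D(t)$.

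The main obstacle is the identification $\chi_{F(n)} = \psi_n$, which rests on having $\rho_{E_n}$ land in $\widetilde{H}$ (not merely some conjugate) so that the twisting formula of Lemma~\ref{L:uniqueness-of-twisting} applies in the same basis used to define $H$. Once this identification is in hand, the prime $2$ is handled by the mild substitution $D(t) \rightsquigarrow 2D(t)$, and odd primes are controlled purely by N\'eron--Ogg--Shafarevich.
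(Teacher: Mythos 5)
Your proof is correct and follows essentially the same route as the paper's: specialize at integers $n$, observe that the quadratic field cut out by the twisting character lies in $\Q(E_n[2^k])$ and is therefore unramified away from $2$ and the primes dividing $D(n)$, conclude that the squarefree part of $F(n)$ divides (a bounded multiple of) $D(n)$, and invoke Lemma~\ref{L:square-free-division-check}. The only cosmetic difference is your detour through $2D(t)$, which the paper avoids by noting that the discriminant of a short Weierstrass model is already a multiple of $16$, so $d \mid D(n)$ holds directly.
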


\begin{proof}
We specialize, picking $n \in \Z$ so that $E_{n}$ is non-singular.
The $2$-adic image for $E_{n}$ is contained in $\widetilde{H}$.
If $K$ is the fixed field of $H$, then $K/\Q$ is a trivial or quadratic extension.
If $\chi$ is the Kronecker character of $K$ (resp.~trivial character), then twisting $E_{n}$ by
$\chi$ will give a curve whose $2$-adic image is contained in $H$.

Since $K \subseteq \Q(E_{n}[2^{k}])$ for some $k$, $K$ must be unramified
away from $2$ and the primes dividing the conductor of $E_{n}$. Since
the conductor of $E_{n}$ divides the minimal discriminant of $E_{n}$,
and this in turn divides the discriminant of $E_{n} \colon y^{2} = x^{3} + A(n) x + B(n)$ (which is a multiple of $16$), we have that if $K = \Q(\sqrt{d})$
with $d$ squarefree, then $d | D(n)$. Moreover, $d$ must be the squarefree part of $F(n)$. The theorem now follows from Lemma \ref{L:square-free-division-check}.
\end{proof}

Here is a summary of the algorithm we apply to compute the polynomial $F(t)$. Throughout, we will write  $F(t) = cd(t)$, where $d(t)$ divides $D(t)$ in $\Z[t]$, $c \in \Q$ is squarefree, and $d(t)$ is not the zero polynomial mod any prime $p$.

\begin{enumerate}
\item We pick an integral model for $E_{t}$ and repeatedly choose integer
values for $t$ for which $E_{t}$ is non-singular and does not have complex
multiplication.
\item\label{item:1} For each such $t$, we compute a family of resolvent polynomials,
one for each conjugacy class of $\tilde{H}$, that will allow us to determine the conjugacy class
of $\rho_{E_{t},2^{k}}({\rm Frob}_{p})$. (See Appendix \ref{App:computing-Image} for a procedure to do this.)
\item We make a list of the quadratic characters corresponding to $\Q(\sqrt{d})$
for each squarefree divisor $d$ of $2 N(E_{t})$. All twists of $E_{t}$
with $2$-adic image contained in $H$ must be from this set.
\item We compute the $\GL_{2}(\Z_{2})$-conjugates of $H$ inside $\widetilde{H}$.
(For the $\widetilde{H}$ that we consider, computation reveals that there can be $1$, $2$, or $4$ of these.)
\item We use the resolvent polynomials to compute the image of ${\rm Frob}_{p}$
for several primes $p$. Once enough primes have been used, it is possible
to identify which twist of $E_{t}$ has its $2$-adic image contained
in each $\GL_{2}(\Z_{2})$-conjugate of $H$.
\item The desired model of $E_{t}$ will be a twist by $c d(t)$
for some divisor $d(t)$ of the discriminant. We keep a list of candidate
values for $c$ for each divisor $d(t)$ that work for all of the $t$-values
tested so far, and eliminate choices of $d(t)$.
\item We go back to the first step and repeat until the number of options
remaining for pairs $(c,d(t))$ is equal to the number of $\GL_{2}(\Z_{2})$-conjugates of $H$ in $\widetilde{H}$. Each of these pairs $(c,d(t))$ gives a model
for $E_{H}$. We output the simplest model found.
\end{enumerate}

\begin{remark}
The algorithm above (step \ref{item:1} in particular) sometimes requires a lot of decimal precision (in some cases as much as $8500$ digits), and is in general fairly slow. Computing the equation for the universal curve over $X_H$ is thus  much slower
than computing equations for $X_H$ when  $-I \in H$.
\end{remark}

\begin{example}
There are two index $2$ subgroups of $H_{57}$ that do not contain $-I$. One of
these, which we call $H_{57a}$, contains $\Gamma(32)$, and is generated by
\[
  \left[ \begin{matrix} 10 & 21 \\ 3 & 13 \end{matrix} \right],
  \left[ \begin{matrix} 15 & 1 \\ 27 & 2 \end{matrix} \right],
  \left[ \begin{matrix} 7 & 7 \\ 0 & 1 \end{matrix} \right].
\]
We will compute $E_{H}$, the universal elliptic curve over $H_{57a}$. We let
\[
  E_{t} \colon y^{2} = x^{3} + A(t)x + B(t),
\]
where
\begin{align*}
  A(t) &= -6(725t^{8} + 1544t^{7} + 2324t^{6} + 2792t^{5} + 2286t^{4} + 1336t^{3} + 500t^{2} + 88t + 5)\\
  B(t) &= -32(3451t^{12} + 11022t^{11} + 22476t^{10} + 35462t^{9} + 43239t^{8} + 41484t^{7} + 32256t^{6} + 19596t^{5}\\
  &+ 8601t^{4} + 2630t^{3} + 564t^{2} + 78t + 5).
\end{align*}
These polynomials were chosen so that
\[
  j(E_{t}) = \frac{2^{6} (25t^{4} + 36t^{3} + 26t^{2} + 12t + 1)^{3} (29t^{4} + 20t^{3} + 34t^{2} + 28t + 5)}{(t^{2} - 2t - 1)^{8}} = p(t),
\]
where $p \colon X_{57} \to X_{1}$ is the map to the $j$-line. There are four
squarefree factors of the discriminant of $E_{t}$ in $\Q[t]$:
\[
  1, t^{2} - 2t - 1, t^{4} + \frac{20}{29} t^{3} + \frac{34}{29} t^{2} + \frac{28}{29} t + \frac{5}{29}, \text{ and }
  t^{6} - \frac{38}{29} t^{5} - \frac{35}{29} t^{4} - \frac{60}{29} t^{3} - \frac{85}{29} t^{2} - \frac{38}{29} t - \frac{5}{29}.
\]
We specialize $E_{t}$ by taking $t = 1$, giving
\[
  E_{t} \colon y^{2} = x^{3} - 69600 x + 7067648.
\]
%We use the Weierstrass $\wp$-function and its derivative to compute
%numerical approximations of the $x$ and $y$-coordinates of the points in
%$E_{t}[32]$. There are $8$ conjugates of $H_{57}$ inside $\GL_{2}(\Z_{2})$, and we
%identify the precise one that gives the action of $\Gal(\overline{\Q}/\Q)$ on
%the numbers $2x(P) + 4y(P)$ for $P \in E_{t}[32]$.
Considering $H_{57}$ as a subgroup of $\GL_{2}(\Z/32\Z)$, it has $416$ conjugacy
classes. We compute the resolvent polynomials for each of these conjugacy
classes and verify that they have no common factors. Since $E_{t}$ has
conductor $2^{8} \cdot 3^{2} \cdot 29^{2}$, the fixed field of $H_{57a}$
inside $\Q(E_{t}[32])$ is a quadratic extension ramified only at $2$, $3$
and $29$. There are sixteen such fields.

There are two index $2$ subgroups of $H_{57}$ that are
$\GL_{2}(\Z_{2})$-conjugate to $H_{57a}$. As a consequence, there are
two quadratic twists of $E_{t}$ whose $2$-adic image will be contained
in some conjugate of $H_{57a}$. By computing the conjugacy class of
$\rho({\rm Frob}_{p})$ for $p = 53$, $157$, $179$ and $193$, we are
able to determine that those are the $-87$ twist and the $174$ twist.
This gives us a total of $8$ possibilities for pairs $(c,d(t))$ (two
for each $d(t)$).

Next, we test $t = 2$. This gives the curve
\[
  E_{t} \colon y^{2} = x^{3} - 4024542x + 3107583520.
\]
This time, we find that the $-4926$ and $2463$ twists are the ones
whose $2$-adic image is contained in $H_{57a}$ (up to conjugacy). This
rules out all the possibilities for the pairs $(c,d(t))$ except for
two. These are $c = 174$ and $c = -87$ and $d(t) = t^{6} -
\frac{38}{29} t^{5} - \frac{35}{29} t^{4} - \frac{60}{29} t^{3} -
\frac{85}{29} t^{2} - \frac{38}{29} t - \frac{5}{29}$. This gives the
model
\[
  E_{H_{57a}} \colon y^{2} = x^{3} + \tilde{A}(t) x + \tilde{B}(t),
\]
where
\begin{align*}
  \tilde{A}(t) &= 2 \cdot 3^{3} \cdot (t^{2} - 2t - 1)^{2} \cdot (25t^{4} + 36t^{3} + 26t^{2} + 12t + 1) (29t^{4} + 20t^{3} + 34t^{2} + 28t + 5)^{3}\\
  \tilde{B}(t) &= 2^{5} \cdot 3^{3} \cdot (t^{2} - 2t - 1)^{3} \cdot (t^{2} + 1) \cdot (7t^{2} + 6t + 1) \cdot (17t^{4} + 28t^{3} + 18t^{2} + 4t + 1)\\ \cdot
 & (29t^{4} + 20t^{3} + 34t^{2} + 28t + 5)^{4}.
\end{align*}
In total, this calculation takes 3 hours and 46 minutes.

The smallest conductor that occurs in this family is $6400$. The
curve $E \colon y^{2} = x^{3} + x^{2} - 83x + 713$ and its $-2$-quadratic twist
$E' \colon y^{2} = x^{3} + x^{2} - 333x - 6037$ both have conductor $6400$ and
$2$-adic image $H_{57a}$.
\end{example}

%****************************************************************************
\section{A curious example}
\label{sec:curious-example}
%****************************************************************************

Before our exhaustive analysis of the rational points on the various $X_H$, we pause to discuss the following curious example, which demonstrates that Hilbert's irreducibility theorem does not necessarily hold when the base is an elliptic curve with positive rank.\\

One expects that if $X_H(\Q)$ is infinite then there exist infinitely many elliptic curves $E/\Q$ such that $\rho_E(G_{\Q})$ is actually \emph{equal} to $H$. The following example shows that this is not necessarily true.

\begin{example}
\label{ex:curious-example}
The subgroup $H_{155}$ is an index $24$ subgroup containing $\Gamma(16)$ generated by
\[
  \left[ \begin{matrix} 1 & 3 \\ 0 & 3 \end{matrix} \right],
  \left[ \begin{matrix} 1 & 0 \\ 2 & 3 \end{matrix} \right],
  \left[ \begin{matrix} 1 & 3 \\ 12 & 3 \end{matrix} \right],
  \text{ and } \left[ \begin{matrix} 1 & 1 \\ 12 & 7 \end{matrix} \right].
\]
The curve $X_{155}$ is an elliptic curve
\[
  X_{155} \colon y^{2} = x^{3} - 2x
\]
and $X_{155}(\Q) \cong \Z/2\Z \times \Z$ and is generated by
$(0,0)$ and $(-1,-1)$. The map from $X_{155}$ to the $j$-line is given
by $j(x,y) = \frac{256 (x^{4} - 1)^{3}}{x^{4}}$.

Since the two-torsion subgroup $X_{155}(\Q)[2]$ is
non-trivial (as imposed by Remark \ref{R:remark-torsion}), $X_{155}$
has an \'etale double cover $\phi\colon E \to X_{155}$ defined over
$\Q$ and such that $E$ has good reduction away from 2. By the
Riemann-Hurwitz formula, $E$ has genus 1; the map is thus a 2-isogeny
and $E(\Q)$ thus has rank one. By \'etale descent (see Subsection
\ref{ssec:etale-descent}), since $X_{155}$ and $E$ have good reduction outside of
2, every point of $X_{155}(\Q)$ lifts to $E_d(\Q)$ for $d \in \{\pm 1, \pm
2\}$. It turns out that for each such $d$, there is an index 2
subgroup $H_d \subset H_{155}$ such that $E_d \cong X_{H_d}$. (These are
$X_{284}$, $X_{318}$, $X_{328}$ and $X_{350}$, respectively.) It follows that
\[
\bigcup_{d \in  \{\pm 1, \pm 2\}} \phi_d(E_{d}(\Q))   = X_{H_{155}}(\Q).
\]
In particular, for every point in $X_{H_{155}}(\Q)$, the 2-adic image of Galois of the corresponding elliptic curve is contained in one of the four index two subgroups $H_d$!
\end{example}

\begin{remark}
  We note that if $X_H \cong \P^1$, then since $\P^1$ has no \'etale
  covers and since there is a finite collection of subgroups
  $H_1,\ldots,H_n$ such that any $K$ properly contained in $H$ is a subgroup
  of some $H_i$, the image
\[
\bigcup_{K \subset H} \phi_{K}(X_{K}(\Q)) = \bigcup_{i = 1}^n \phi_{H_i}(X_{H_i}(\Q))   \subset X_H(\Q)
\]
(where $\phi_{K}$ is the map $X_K \to X_H$ induced by the inclusion $K \subset H$) is a thin set, and in particular most (i.e.~a density one set) of the points of $X_H(\Q)$
correspond to $E/\Q$ such that $\rho_E(G_{\Q}) = H$.
\end{remark}

% \jeremy{It's fairly clear from context what the maps $\phi_{K}$ are, but maybe we should actually say something about them.}

\begin{remark} There are seven genus one curves $X_{H}$ that are elliptic curves of positive rank where
the corresponding subgroup $H$ has index $24$.
In all seven cases, all
of the rational points lift to modular double covers (although it is
not always the case that all four twists have local points). In fact,
every one of the $20$ modular curves $X_{H}$, where $H$ has index $48$
and for which $X_{H}(\Q)$ is a positive rank elliptic curve is a double cover of one of these seven curves.
\end{remark}

This example is more than just a curiosity; it inspired the technique
of Subsection \ref{ssec:etale-descent-via} which allows us to
determine the rational points on most of the genus 5 and 7 curves.\\

This example also raises the following question.

\begin{question}
  Do there exist infinite unramified towers of modular curves such that each twist necessary for \'etale descent is modular?
\end{question}

If so, this would imply that none of the curves in such a tower have non-cuspidal non-CM points.  A potential example is the following: the Cummins/Pauli database \cite{MR2016709} reveals that there might be such a tower starting with $16A^2, 16B^3, 16B^5, 16B^9, 16A^{17}$. There is then a level 32, index 2 subgroup of $16A^{17}$ that has genus 33.

%****************************************************************************
\section{Analysis of Rational Points - theory}
%****************************************************************************

The curves whose models we computed above have genera either
0,1,2,3,5,7; see Table \ref{tab:curves}.\\

 %  The ranks of their Jacobians are listed in the table
% below. In the genus 3 and 5 cases we can only compute (probable) analytic ranks;
% we will only need the agreement of the analytic and algebraic rank in the rank 0 cases, which, since we are working with modular abelian varieties, is \cite{KolyvaginL:finitenessModular}*{Theorem 0.3}.  We are unable to compute the analytic
% ranks of the Jacobians of the genus 7 curves, but this turns out to be
% unnecessary.
% \\
% \vspace{3pt}
% \begin{center}
%     \begin{tabular}{| c | c | c | c | c | c | c |}
%       \hline
%       Genus   & 0 &1 &2&3&5&7\\
%       \hline
%       Number & 175  & 52 & 57 & 18&20& 4\\
%       \hline\hline
% \multicolumn{7}{ |c| }{$\rk \Jac$ } \\
%        % $\rk \Jac$ &   &  & & && \\
%       \hline
%        0 &   & 25& 46& --&--& ?? \\
%        1 &   & 27& 3 & 9 &10& ?? \\
%        2 &   &   & 8 &  --& --& ?? \\
%        3 &   & &  & 9&    --& ?? \\
%        4 &   & &  & & -- & ?? \\
%        5 &   & &  & &    10& ?? \\
% \hline
%     \end{tabular}
%   \end{center}

For the genus 0 curves, we determine whether the curve has a rational point, and if so we compute an explicit isomorphism with $\P^1$. For the genus 1 curves, we determine whether the curve has a rational point, and if so compute a model for the resulting elliptic curve and determine its rank and torsion subgroup. This is straightforward: all  covering maps except 4 have degree 2, so we end up with a model of the form $y^2 = p(t)$, where $p(t)$ is a polynomial, and the desired technique is implemented in Magma. The remaining 4 cases are handled via a brute force search for points.

In the higher genus cases, we determine the complete set of rational points. Each of the following techniques play a role:

\begin{enumerate}
\item local methods,
\item Chabauty for genus 2 curves,
\item elliptic curve Chabauty,
\item \'etale descent,
% \item ``Torsion packet sieve'', a modified Mordell Weil sieve to analyze rational points on curves with an involution that map to a rank 0 abelian variety,
% Double covers of genus 3 curves and the explicit Prym computations by Bruin,
% to produce and analyze a map to a rank 0 abelian variety.
\item ``modular'' \'etale double covers of genus 5 and 7 curves, and
% \item Using a map to a rank $<$ dimension abelian variety, and explicitly doing Chabauty.
\item an improved algorithm for computing automorphisms of curves.
\end{enumerate}

In this section we describe in detail the theory behind the techniques
used to analyze the rational points on the higher genus  curves. The remainder of the paper is a case by case analysis of the rational points on the various $X_H$.

\begin{remark}[Facts about rational points on $X_H$]

  \begin{enumerate}
  \item []
  \item Every rational point on a curve $X_{H}$ of genus one that has rank zero is a cusp or a CM point.
  % \item The seven curves $X_{150}$, $X_{153}$, $X_{155}$, $X_{156}$, $X_{165}$, $X_{166}$, and $X_{167}$ are all elliptic with positive rank, and every rational point on $X_{H}$ lifts to a rational point on $X_{K}$ for some index 2 subgroup $K$ of $H$.
  \item The only genus $2$ curve with non-cuspidal, non-CM rational points is $X_{441}$, also known as $X_{ns}^{+}(16)$. This curve has two non-cuspidal, non-CM rational points, with distinct $j$-invariants.

  \item The only genus $3$ curves with non-cuspidal, non-CM rational points are $X_{556}$, $X_{558}$, $X_{563}$, $X_{566}$, $X_{619}$, $X_{649}$. Each of these gives rise to a single, distinct $j$-invariant.

  \item All the rational points on the genus $5$ and $7$ curves are either cusps or CM points.

  % \item The curve $X_{504}$ is a genus two curve with six rational points. Its Jacobian has rank zero, and torsion subgroup $\mathbb{Z}/2\mathbb{Z} \times \mathbb{Z}/10\mathbb{Z}$.

  \end{enumerate}

\end{remark}

\begin{remark}
\label{R:remark-torsion}
The following observation powers many of these approaches -- since
Jacobians of 2-power level modular curves have good reduction outside
of 2, each Jacobian is ``forced'' to have a non-trivial two torsion
point (and more generally forced to have small mod 2 image of
Galois). Indeed, the two division field $\Q(J[2])$ is unramified
outside of 2, and there are few such extensions of small degree. In
\cite{MR2643896}, it is shown that if $[K : \Q] \leq 16$ and $K/\Q$ is
ramified only at $2$, then $[K : \Q]$ is a power of $2$. In
particular, there are no degree 3 or 6 extensions of $\Q$ ramified
only at 2, so an elliptic curve with conductor a power of 2 has a
rational 2-torsion point.  (In practice of course one can often
compute directly the torsion subgroup of the Jacobian, by computing
the torsion mod several primes, and then explicitly finding
generators.) We remark that there is, however, a degree $17$ extension
of $\Q$ ramified only at $2$, arising from the fact that the class
number of $\Q(\zeta_{64})$ is $17$.
\end{remark}

%****************************************************************************
\subsection{Chabauty}
%****************************************************************************

See \cite{McCallumP:chabautySurvey} for a survey. The practical output is that
% \[
% \xymatrix{
% X(\Q) \ar[r]\ar[d] & J(\Q)\ar[d] \\
% X(\Q_p)\ar[r]& J(\Q_p)
% }
% \]
if $\rk \Jac_X(\Q) < \dim \Jac_X = g(X)$, then $p$-adic integration produces explicit 1-variable power series $f \in \Q_p\llbracket t \rrbracket$ whose set of  $\Z_p$-solutions contains all of the rational points. This is all implemented in Magma for genus 2 curves over number fields, which will turn out to be the only case needed. See the section below on genus 2 curves for a complete discussion.

%****************************************************************************
\subsection{Elliptic Chabauty}
\label{ssec:elliptic-chabauty}
%****************************************************************************

Given an elliptic curve $E$ over a number field $K$ of degree $d > 1$ over $\Q$ and a map $E \xrightarrow{\pi} \P^1_K$,  one would like to determine the subset of $E(K)$ mapping to $\P^1(\Q)$ under $\pi$. A method analogous to Chabauty's method provides a partial solution to this problem under the additional hypothesis that $\rank E(K) < d$ (and has been completely implemented in Magma).
The idea is to expand the map $E \to \P^1_K$ in $p$-adic power series and analyze the resulting system of equations using Newton polygons or similar tools.
See \cite{Bruin:ellChab}, \cite{bruin:nn2} for a succinct description of the method and instructions for use of its Magma implementation.

A typical setup for applications is the following.
\[
\xymatrix{
C\ar[rd]\ar[dd]_{\phi} &  \\
& E \ar[dl]^{\psi}\\
\P^1& \\
}
\]
We have a higher genus curve $C$ whose rational points we want to determine, and we have a particular map $C \to \P^1$ which is defined over $\Q$ and which factors through an elliptic curve $E$ over a number field $K$ (but does not necessarily factor over $\Q$). Then any $K$-point of $E$ which is the image of a $\Q$-point of $C$ has rational image under $E \to \P^1$, exactly the setup of elliptic curve Chabauty. (Finding the factorization $C \to E$ can be quite tricky; see Subsection \ref{ssec:analysis-x_619} for an example.)

%****************************************************************************
\subsection{\'Etale descent}
\label{ssec:etale-descent}
%****************************************************************************

\'Etale descent is a ``going up'' style technique, first studied in \cite{coombesG:heterogeneous} and \cite{wetherell1:thesis} and developed as a full theory (especially the non-abelian case) in \cite{Skorobogatov:torsorsBook}. It is now a standard technique for resolving the rational points on curves (see e.g.~\cite{Bruin:ellChab}, \cite{flynnW:challengeProblem}) and lies at the heart of the modular approach to Fermat's last theorem (see \cite[5.6]{Poonen:bookNumberTheory}).

Let $\pi\colon X \to Y$ be an \'etale cover defined over a number field $K$ such that $Y$ is the quotient of some free action of a group $G$ on $X$. Then there exists a finite collection $\pi_1\colon X_1 \to Y,\ldots,\pi_n\colon X_n \to Y$ of twists of $X\to Y$ such that
\[
\bigcup_{i = 1}^n \pi_i(X_{i}(K))   = Y(K).
\]
Moreover, if we let $S$ be the union of the set of primes of bad reduction of  $X$ and $Y$ and of the primes of $\calO_K$ over the primes dividing $\#G$, then the cocycles corresponding to the twists are unramified outside of $S$. (See e.g.~\cite[5.3]{Skorobogatov:torsorsBook}.)
% For curves this implicitly uses Riemann Hurwitz.

We will use this procedure only in the case of \'etale double covers. In this case, $G = \Z/2\Z$ and, since the twists are consequently quadratic, we will instead denote twists of a double cover $X \to Y$ by $X_d \to Y$, where $d \in K^{\times}/\left(K^{\times}\right)^{2}$, and the above discussion gives that, for any point $P$ of $Y(K)$, there will exist $d \in \calO_{K,S}^{\times}/(\calO_{K,S}^{\times})^2$ such that $P$ lifts to a point of $X_d(K)$.

\subsection{\'Etale descent via double covers with modular twists}
\label{ssec:etale-descent-via}
%****************************************************************************

The following variant of Example \ref{ex:curious-example}  will allow us to resolve the rational points on some of the high genus curves.\\

We will occasionally be in the following setup: $K \subset H \subset \GL_2(\Z_2)$ are a pair of open subgroups such that $g(X_H) > 1$ and the corresponding map $X_{K} \to X_H$ is an \'etale double cover.
By \'etale descent (see Subsection \ref{ssec:etale-descent}), since
$X_H$ and $X_{K}$ have good reduction outside of 2, every point of
$X_H(\Q)$ lifts to a rational point on a quadratic twist $X_{K,d}(\Q)$ for $d \in \{\pm 1, \pm 2\}$, so that
\[
\bigcup_{d \in  \{\pm 1, \pm 2\}} \phi_d(X_{K,d}(\Q))   = X_H(\Q),
\]
where $X_{K} \to X_H$ is induced by the inclusion $K \subset H$ and $\phi_{K,d}$ is the twist of this by $d$.

% \jeremy{Once again, should we say what $\phi_{d}$ is?}

It turns out that, additionally, for each such $d$ there is an index 2 subgroup $K_d \subset H$ such that $X_{K_d} \cong X_{K,d}$; i.e.~each of the quadratic twists are also modular. Finally, a third accident occurs: each of the subgroups $K_d$ is contained in a subgroup $L_d$ such that $X_{L_d}$  either has genus 1 and has no rational point, is an elliptic curve of rank zero, or is a genus zero curve with no rational points. In particular, since the inclusion of subgroups $K_d \subset L_d$ induces a map $X_{K_d} \to X_{L_d}$, this determines all of the rational points on each twist $X_{K_d}$, and thus on $X_{H}$.
\\

This phenomenon occurs for $16$ of the $20$ subgroups $H$ for which $X_{H}$ has
genus $5$, and all four of the cases when $X_{H}$ has genus $7$. See
Subsection \ref{ssec:non-explicit-modular} for details.

%****************************************************************************
 \subsection{Constructing automorphisms of curves over number fields}
\label{ssec:automorphismGroups}
%****************************************************************************

If $C$ is a curve of genus $g$ and $D \to C$ is a degree $n$ \'etale cover
of $C$, then the genus of $D$ is $ng - (n-1)$. In order to analyze rational
points on $D$, it is very helpful to be able to find maps from $D$ to
curves of lower genus. In this context, it is helpful to compute
the group $G$ of automorphisms of $D$ and consider quotients $D/H$
for subgroups $H \subseteq G$.

Magma's algebraic function field machinery is able to compute automorphism
groups of curves. However, the performance of these routines varies quite
significantly based on the complexity of the base field. The routines
work quickly over finite fields, but are often quite slow over number
fields, especially when working with curves that have complicated models.

For our purposes, we are interested in quickly constructing automorphisms
(defined over $\Qbar$) of non-hyperelliptic curves $D/\Q$  with genus $\geq 3$.
(Magma has efficient, specialized routines for genus 2 and genus 3 hyperelliptic curves.) Our goal is not to provably compute the automorphism group, but
to efficiently construct all the automorphisms that likely exist. The procedure
we use is the following.

\begin{enumerate}
\item Given a curve $D/\Q$, use Magma's routines to compute $\Aut(D/\F_{p})$
for several different choices of primes $p$. If all automorphisms of $D$ are defined over the number field $K$,
then we expect that if $p$ splits completely in $K$, then $|\Aut(D/\F_{p})| = |\Aut_{\Qbar}(D)|$. Data for several
primes will give a prediction for $|\Aut_{\Qbar}(D)|$ and $K$.
\item Consider the canonical embedding of $D \subset \P^{g-1}$. Any automorphism
of $C$ can be realized as a linear automorphism of $\P^{g-1}$ that fixes the canonical image of $D$.
\item Construct the ``automorphism scheme'' $X/\Q$ of linear automorphisms
from $\P^{g-1}$ that map $D$ to itself. Let $I(D) \subseteq \Q[x_{1},x_{2},\ldots,x_{g}]$
denote the ideal of polynomials that vanish on the canonical image of $C$. For each
homogeneous generator $f_{i}$ of $I(D)$ of degree $d_{i}$, we construct
a basis $v_{1}^{(i)}, v_{2}^{(i)}, \ldots, v_{e_{i}}^{(i)}$ for the degree $d_{i}$ graded piece of $I(D)$. If $\phi \colon D \to D$ is an automorphism, then
\[
  \phi(f_{i}) = \sum_{j=1}^{e_{i}} c_{i,j} v_{j}^{(i)}.
\]
We construct the automorphism scheme as a subscheme of $\A^{d}$, where
$d = g^{2} + \sum_{i} d_{i} + 1$. We use $g^{2}$ variables for the linear transformation, $\sum_{i} d_{i}$ variables for the constants $c_{i,j}$ in the above equation, and one further variable to encode the multiplicative inverse of the determinant of the linear transformation. (This scheme actually has dimension $1$ since an arbitrary scaling of
the matrix is allowed.) We will extend $X$
to a scheme over $\Spec \Z$ (which we also call $X$).
\item Choose a prime $p$ that splits completely in $K$ and a prime
  ideal $\mathfrak{p}$ of norm $p$ in $\mathcal{O}_{K}$, the ring of
  integers in $K$.  Use Magma's routines to compute $\Aut(D/\F_{p})$
  and represent these automorphisms as points in $X(\F_{p})$.
\item Use Hensel's lemma to lift the points on $X(\F_{p})$ to points on
$X(\Z/p^{r} \Z)$ for some modestly sized integer $r$. (We frequently use $r = 60$.) Hensel's lemma is already implemented in Magma via {\tt LiftPoint}.
\item Scale the lifted points so that one nonzero coordinate is equal to $1$.
Then use lattice reduction to find points in $K$ of small height
that reduce to the points in $X(\Z/p^{r} \Z)$ modulo $\mathfrak{p}^{r}$.
Use these to construct points in $X(K)$, i.e., automorphisms of $D$ defined
over $K$.
\end{enumerate}

The above algorithm runs very quickly in practice for curves of reasonably small genus. For example, the genus $5$ curve given by
\begin{align*}
  -2705a^{2} + 1681b^{2} - 1967bc + 2048c^{2} - 2d^{2} &= 0\\
  73a^{2} - 41b^{2} + 64bc - 64c^{2} - 2de &= 0\\
  -2a^{2} + b^{2} - 2bc + 2c^{2} - 2e^{2} &= 0
\end{align*}
is one of the \'etale double covers of $X_{619}$. This curve has (at least) $16$ automorphisms defined over $\Q(\sqrt{2 + \sqrt{2}})$ which are found by the above algorithm in $25.6$ seconds. However, Magma's built in routines require a
long time to determine the automorphism group (the routine did not finish
after running it for 3 and 1/2 days).

%****************************************************************************
 \subsection{Fast computation of checking isomorphism of curves}
\label{ssec:testingIsomorphism}
%****************************************************************************

A related problem to computing automorphisms is proving that two curves are isomorphic. There are many instances of non-conjugate subgroups $H$ and $K$ with $X_{H} \cong X_{K}$. Within the $22$ genus three curves, there are at most $7$ isomorphism classes. Within the $20$ genus five curves, there are at most $10$ isomorphism classes. The $4$ genus seven curves fall into two isomorphism classes.

Magma's built-in command {\tt IsIsomorphic} suffices for hyperelliptic curves and a few higher genus curves that happen to have nice models. The simplest
way to determine if two non-hyperelliptic genus $3$ curves are isomorphic is to
compute their canonical models and apply {\tt MinimizeReducePlaneQuartic} and inspect
the resulting simplified polynomials - at this point the isomorphisms can be seen by inspection.

In the genus $5$ case, we use a variant of the approach described for automorphisms,
and, given two curves $C_{1}$ and $C_{2}$, we construct an ``isomorphism scheme'' in
a similar way to the automorphism scheme above. Again, we use Magma's internal commands to find isomorphisms mod $p$, and lift these to characteristic zero isomorphisms.
In the genus $7$ case, Magma's built-in commands are the most efficient.

%****************************************************************************
 \subsection{Probable computation of ranks}
\label{ssec:probableComputationOfRanks}
%****************************************************************************

It is straightforward to compute the rank of a curve of genus at most 2 using Magma's preexisting commands (e.g.~via \verb+RankBound+, an implementation of \cite{stoll:hyperellipticDescent}); computation of the rank of the Jacobian of a genus 3 plane curve has recently been worked out \cite{bruinPS:genus3Descent}, but is often impractical \cite{bruinPS:genus3Descent}*{Remark 1.1} and moreover has not been implemented in a publicly available way. For genus $> 3$ little is known in general (though special cases such as cyclic covers of $\P^1$ are known \cite{PoonenS:explicitDescentFor}, \cite{StollL:selmerCyclic}).

For the determination of the rational points on each $X_H$, we will only need a rigorous computation of rank for genus at most 2. Nonetheless, in many cases we can compute ``probable'' ranks, and mention this in the discussion as an indication of why we chose a particular direction of analysis. If $H$ is a subgroup of $\GL_{2}(\Z_{2})$
that contains $\Gamma(2^{k})$, then $X_{H}$ is a quotient of $X_{1}(4^{k})$, but the map from $X_{1}(4^{k}) \to X_{H}$ is only defined over $\Q(\zeta_{4^{k}})$. For
this reason, we cannot immediately conclude that each factor $A$ of $\Jac X_{H}$ is modular. However, numerical data suggests that each such $A$ is indeed a factor
of $\Jac X_{1}(4^{k})$. We can find a candidate for the corresponding modular form $f$ (e.g.~by comparing traces) and compute a guess for the analytic rank, but we cannot prove that $A \cong A_f$, or that the algebraic and analytic ranks of $A_f$ agree.

%****************************************************************************
\section{Analysis of Rational Points - Genus 2}
%****************************************************************************

In the remaining sections we provably compute all of the rational points on each modular curve. Magma code verifying the below claims is available at \cite{RouseZB:2adicTranscript} and additionally at the arXiv page of this paper.\\

There are 57 arithmetically maximal genus 2 curves. Among these, 46  have Jacobians with rank 0, 3 with rank 1, and 8 with rank 2. We will use \'etale descent on the rank 2 cases and Chabauty on the others. In each case, the rank of the Jacobian is computed with Magma's $\verb+RankBound+$ command. See the transcript of computations for full details, and see \cite{bruinS:wee} for a detailed discussion of all practical techniques for determining the rational points on a genus 2 curve.

%****************************************************************************
\subsection{Rank 0}
%****************************************************************************

If $\rk \Jac_X(\Q) =0$ then $\Jac_X(\Q)$ is  torsion. To find all of the rational points on $X$ it thus suffices to compute the torsion subgroup of $\Jac_X(\Q)$ and compute preimages of these under an inclusion $X \hookrightarrow \Jac_X$.  This is implemented in Magma as the $\verb+Chabauty0(J)+$ command, and
in each case Magma computes that the only rational points are the known points.

%****************************************************************************
\subsection{Rank 1}
%****************************************************************************
If $\rk \Jac_X(\Q) =1$ then one can attempt Chabauty's method.
This is implemented in Magma as the $\verb+Chabauty(ptJ)+$ command, and
in each case Magma computes that the only rational points are the known points.

%****************************************************************************
\subsection{Rank 2}
\label{ssec:rank2}
%****************************************************************************
If $\rk \Jac_X(\Q) =2$ then Chabauty's method doesn't apply and the analysis is more involved; instead we proceed by \'etale descent. In each case, the Jacobian of $X$ has a rational 2-torsion point. Thus, given a model
\[
X \colon y^2  = f(x)
\]
of $X$, $f$ factors as $f_1f_2$, where both are polynomials of positive  degree (and both of even degree if $f$ has even degree), and $X$ admits  \'etale double covers $C_d \to X$, where  the curve $C_d$ is given by
\[
\begin{array}{crl}
  C_d\colon &dy_1^2 \, =   & f_1(x) \\
  &dy_2^2 \, = & f_2(x)
\end{array}
\]
Since $X$ has good reduction outside of 2 and the 2-cover $C_1 \to X$ is \'etale away from 2 (since it is the pullback of a 2-isogeny  $A \to \Jac_X$, and such an isogeny is \'etale away from 2), by \'etale descent (see \ref{ssec:etale-descent} above) every rational point on $X$ lifts to a rational point on $C_d(\Q)$ for $d \in \{\pm 1, \pm 2\}$. The Jacobian of $C_d$ is isogenous to $\Jac_X \times E_d$, where $E_d$ is the Jacobian of the (possibly pointless) genus one curve  $dy_2^2 = f_2(x)$ (where we assume that $\deg f_2 \geq \deg f_1$, so that $\deg f_2 \geq 3$).

There are 4 isomorphism classes of genus 2 curves in our list with Jacobian of rank 2 ($X_{395},X_{402},X_{441},X_{520}$). In two cases ($X_{395}$ and $X_{402}$), each twist $C_d$ maps to a rank 0 elliptic curve. For example,
$X_{395}$ is the hyperelliptic curve $y^2 = x^6 - 5x^4 - 5x^2 + 1 = (x^2 - 2x - 1)(x^2 + 1)(x^2 + 2x - 1)$. This admits \'etale covers by the genus 3 curves
\[
\begin{array}{crl}
  C_d\colon &dy_1^2 \, =   & x^2 + 1\\
  &dy_2^2 \, = & (x^2 - 2x - 1)(x^2 + 2x - 1)
\end{array}
\]
each of which in turn maps to the genus 1 curve  $E_{d} : dy_2^2 = (x^2 - 2x - 1)(x^2 + 2x - 1)$, and for $d \in \{\pm 1, \pm 2\}$, $\rk \Jac_{E_d} = 0$, allowing the determination the rational points on each $C_d$ and thus on $X_{395}$.

For the remaining genus 2 curves, three of the twists map to a rank 0 elliptic curve, but the twist by $-2$ maps to a rank 1 elliptic curve.  Here one may apply \'etale descent again, but over a quadratic extension.
For example,  $X_{441}$ is the hyperelliptic curve $y^2 = x^6 - 3x^4 + x^2 + 1 =(x - 1)(x + 1)(x^4 - 2x^2 - 1)$. (This is the curve $X^+_{\text{ns}}(16)$ whose non-cuspidal points classify elliptic curves whose mod 16 image of Galois is contained in the normalizer of a non-split Cartan subgroup. The rational points on this curve are resolved in \cite{baran:normalizersClassNumber} via elliptic Chabauty; we give an independent determination of the rational points on this curve.)
This admits  \'etale covers by the genus 3 curves
\[
\begin{array}{crl}
  C_d\colon &dy_1^2 \, =   & (x - 1)(x + 1)\\
  &dy_2^2 \, = & (x^4 - 2x^2 - 1)
\end{array}
\]
The Jacobian of $dy_2^2 = x^4 - 2x^2 - 1$ has rank 0 for $d = \pm 1, 2$. For $d = -2$, we note that since $x^4 - 2x^2 - 1$ factors over $\Q(\sqrt{2})$ as $((x-1)^2-\sqrt{2}) ((x-1)^2+\sqrt{2})$, $C_{-2}$ admits a further \'etale double cover over $\Q(\sqrt{2})$ by
\[
\begin{array}{crl}
  X_{-2,d'}\colon &-2y_1^2 \, =   & (x - 1)(x + 1)\\
  &-2d'y_2^2 \, = & (x-1)^2-\sqrt{2}\\
  &d'y_3^2 \, = & (x-1)^2+\sqrt{2}
\end{array}
\]
(Note that a priori one expects this factorization to occur over a small field by Remark \ref{R:remark-torsion}.) By descent theory, every rational point on $C_{-2}$ lifts to a $K := \Q(\sqrt{2})$ point on  $X_{-2,d'}$ for some $d' \in \calO_{K,S}^{\times}/\left(\calO_{K,S}^{\times}\right)^{2}$.
These each map to the two genus 1 curves $d' y^2 = (x - 1)(x + 1)((x-1)^2 - \sqrt{2})$ and
$-2d' y^{2} = (x-1)(x+1) ((x-1)^{2} + \sqrt{2})$. 
For 6 of the 8 such $d'$, one of these curves has rank 0, and  for 2 both have rank 1. Any point coming from a rational point on $X_{441}$ has rational $x$-coordinate, and elliptic Chabauty (as described in Subsection \ref{ssec:elliptic-chabauty}) successfully resolves the rational points on the remaining two curves.

%\begin{remark}
%The only genus $2$ curve with non-cuspidal, non-CM rational points is $X_{441}$%.
%Of the $10$ rational points on $X_{441}$, $8$ correspond to CM curves ($j = 0$ occurs twice), and two correspond to the $j$-invariants listed in Remark~\ref{exceptionalpoints}.
%\end{remark}

%****************************************************************************
\section{Analysis of Rational Points - Genus 3}
%****************************************************************************

% \jeremy{A few times in this section, we make reference to the ranks or (probable) analytic ranks of the Jacobians. Can we really provably compute these?}

There are 18 genus 3 curves (and at most 7 isomorphism classes).
% X563 <-> X556
% X566 <-> X558
% X651 <-> X635 <-> X632 <-> X618
% X649 <-> X637 <-> X634 <-> X619
% X641 <-> X633
% X650 <-> X626
% X654 <-> X628
Of the isomorphism classes, $X_{556}, X_{558}$ are hyperelliptic and handled by \'etale descent; $X_{618}$ admits a map to a rank zero elliptic curve defined over $\Q(\sqrt{2})$; $X_{628}$, $X_{641}$, and $X_{650}$ have nice models and can be handled in a direct, ad hoc manner. Finally, $X_{619}$ is the most difficult case --  it has six rational points and its Jacobian has (probable) analytic rank 3; we are nonetheless able to handle this curve via an elliptic Chabauty argument whose setup is non-trivial. All other genus 3 curves on our list are isomorphic to one of these.

\begin{remark}
Unfortunately, consideration of Prym varieties (see \cite{bruin:Prym} for a discussion) do not simplify analysis of any of the above curves; for instance, $X_{619}$ admits an \'etale double cover, but one of the twists of the associated Prym varieties has rank 2.
\end{remark}

%  \begin{longtable}
%      {| c | c | c | c | c | c | c |}
%     \hline
% Label & $X(\Q)$ & Rank  & $\delta$ & $D_{\delta}(\Q)$ & Rank of Prym & Notes\\
% \hline
% \hline
% $X_{618}$ &  &  1 &  &  & & \\\hline
% $X_{619}$ &  &  3&  &  & 2 & Elliptic Chabauty \\\hline
% % $X_{626}$ &  &  3&  &  & 1& \\\hline
% $X_{628}$ &  &  3&  &  & 1& Direct argument \\\hline
% $X_{632}$ &  &  1&  &  & & \\\hline
% $X_{633}$ &  &  1&  &  & & \\\hline
% % $X_{634}$ & &  &  &  & & Isomorphic to $X_{619}$ \\\hline
% $X_{635}$ &  & 1 &  &  & & \\\hline
% % $X_{637}$ &  &  &  &  & & Isomorphic to $X_{619}$ \\\hline
% $X_{641}$ &  &  3&  &  &1 & Nice model \\\hline
% % $X_{649}$ &   &  &  &  & & Isomorphic to $X_{619}$\\\hline
% $X_{650}$ &  &  3&  &  & 1& Nice model \\\hline
% $X_{651}$ &  &  1&  &  & & \\\hline
% % $X_{654}$ &  &  &  &  & & Isomorphic to $X_{628}$\\\hline
% % 0 & $\leq 2$ &  & - & - & & \\

% \end{longtable}
% \begin{center}
%   \tiny{Genus 3 curves that `require work'. Missing curves are isomorphic to others on the list.}
% \end{center}

%****************************************************************************
\subsection{Genus 3 hyperelliptic}
%****************************************************************************

The genus 3 curves $X_{556}, X_{558}, X_{563}, X_{566}$ are hyperelliptic. The last two curves are isomorphic to the first two, which are given by
\[
\begin{array}{cl}
  % X_{556}\colon &y^2 = x^7 + 4x^6 - 7x^5 - 8x^4 + 7x^3 + 4x^2 - x = x(x^2 - 1)(x^4 + 4x^3 - 6x^2 - 4x + 1)\\
  % X_{558}\colon & y^2 = x^8 - 4x^7 - 12x^6 + 28x^5 + 38x^4 - 28x^3 - 12x^2 + 4x + 1 = (x^2 - 2x - 1)(x^2 + 2x - 1)(x^4 - 4x^3 - 6x^2 + 4x + 1)
  X_{556}\colon &y^2 = x^7 + 4x^6 - 7x^5 - 8x^4 + 7x^3 + 4x^2 - x \\
  X_{558}\colon & y^2 = x^8 - 4x^7 - 12x^6 + 28x^5 + 38x^4 - 28x^3 - 12x^2 + 4x + 1
\end{array}
\]
Their Jacobians have rank 1, but unfortunately much of the machinery  necessary to do Chabauty on curves of genus $g > 2$ is not implemented in Magma
(e.g., a simple search did not reveal generators for the Jacobian of $X_{556}$; for a genus 2 curve one can efficiently search on the associated Kummer surface, but the analogous computation for abelian threefolds is not implemented).

Instead, we proceed by descent. The hyperelliptic polynomials both factor, so each $X$ admits an \'etale double cover which itself admits a map to a genus 2 curve. Rational points on the genus 3 curves lift to twists of the \'etale double cover by  $d \in \{\pm 1, \pm2\}$.
For example, $X_{556}$ admits  \'etale double covers by the genus 5 curves
\[
\begin{array}{crl}
  C_d\colon &dy_1^2 \, =   & x\\
  &dy_2^2 \, = & (x - 1)(x + 1)(x^4 + 4x^3 - 6x^2 - 4x + 1)
\end{array}
\]
which each maps to the genus 2 hyperelliptic curve
\[
H_d\colon dy^2 = (x - 1)(x + 1)(x^4 + 4x^3 - 6x^2 - 4x + 1).
\]
For $d \in \{\pm 1, \pm 2\}$ the Jacobian of $H_d$ has rank 0 or 1, and Chabauty reveals that any rational point on $X_{556}$ is either a point at infinity
or satisfies $x = 0$ or $y = 0$. Similarly, the defining polynomial of $X_{558}$ factors as $(x^2 - 2x - 1)(x^2 + 2x - 1)(x^4 - 4x^3 - 6x^2 + 4x + 1)$, and each of the four resulting genus 2 hyperelliptic curves 
\[
  dy^{2} = (x^{2} + 2x - 1)(x^{4} - 4x^{3} - 6x^{2} + 4x + 1)
\]
have Jacobians of rank 1.

Each of these four hyperelliptic curves has four non-cuspidal, non-CM rational
points that all have the same image on the $j$-line. For $X_{556}$ we obtain
$j = 2^{4} \cdot 17^{3}$, for $X_{558}$ we obtain $j = \frac{4097^{3}}{16}$,
for $X_{563}$ we obtain $j = 2^{11}$, and for $X_{566}$ we obtain $j = \frac{257^{3}}{256}$.

%****************************************************************************
\subsection{Analysis of $X_{618}$}
%****************************************************************************
 The curve $X_{618}$ has two visible rational points.
% The Jacobian of $X_{618}$ breaks up as $E \times A$, where $A$ is isomorphic to $\Res_{} E'$ for the elliptic
Over the field $\Q(\sqrt{2})$, $X_{618}$ maps to  the elliptic curve
\[
E \colon  y^2 = x^3 + (\sqrt{2} + 1)x^2 + (-3\sqrt{2} -5)x + (-2\sqrt{2} - 3)
\]
which has rank $0$ over $\Q(\sqrt{2})$ and has four $\Q(\sqrt{2})$-rational points, two of which lift to rational points of $X_{618}$.

We found this cover by computing  $\Aut X_{618, \Q(\sqrt{2})}$ (which has order 8) and computing $E$ as the quotient of $X_{618, \Q(\sqrt{2})}$ by one of these automorphisms. (See Subsection \ref{ssec:automorphismGroups} for a description of this computation.)

\subsection{Analysis of $X_{619}$}
\label{ssec:analysis-x_619}
%****************************************************************************
The above techniques do not work on $X_{619}$; its Jacobian has (probable) analytic rank 3 and, while it admits an \'etale double cover $D$, a twist of $D$ has rational points and associated Prym variety of rank 2. The curve $D_{\delta}$ has the equation
\begin{align*}
  \delta r^{2} &= -2705u^{2} + 1681v^{2} - 1967vw + 2048w^{2}\\
  \delta rs &= 73u^{2} - 41v^{2} + 64vw - 64w^{2}\\
  \delta s^{2} &= -2u^{2} + v^{2} - 2vw + 2w^{2}.
\end{align*}

A bit of work reduces this to an elliptic Chabauty computation. Over
the quartic field $K = \Q(a)$, where $a = \sqrt{2 + \sqrt{2}})$, any quadratic twist
$D_{\delta}$ of $D$ has automorphism group $D_8 \times \Z/2\Z$. Let $H$ be the subgroup  $\langle \iota_1, \iota_2 \rangle$, where
$\iota_{1} : D_{\delta} \to D_{\delta}$ is given by $\iota_{1}(u : v : w : r: s) = (u : -v : -w : r : s)$ and
\begin{align*}
  \iota_{2}(u : v : w : r : s) &= (u : \frac{\sqrt{2}}{2} v - w : -\frac{1}{2} v - \frac{\sqrt{2}}{2} w : \\
  & \frac{1}{18} (-73a^{3} + 228a) r + \frac{1}{18} (-2624a^{3} + 8529a) s : \frac{1}{9} (a^{3} - 3a)r + \frac{1}{18} (73a^{3} - 228a) s).
\end{align*}

The twist $D_{-2}$ has no $\Q_2$ points. When $\delta = 1$ or $2$, the quotient $D_{\delta}/H$ is isomorphic to the elliptic curve
\[
E_{+} \colon \delta y^2 = x^3 + (a^3 + 1)x^2 + (194a^3 + 153a^2 - 660a - 509)x + (-1815a^3 - 1389a^2 + 6202a + 4747)
\]
 and the quotient $D_{-1}/H$ is isomorphic to the elliptic curve
\[
E_{-} \colon \delta y^2 = x^3 + (a^3 + a^2 + a + 1)x^2 + (4a^3 + 8a^2 + 6a - 11)x + (-3a^3 + 29a^2 + 11a - 27).
\]
 The quotient of $D_{\delta}$ by $\Aut D_{\delta}$ is $\P^1$; the quotient map $\phi_{\delta}\colon D_{\delta} \to \P^1$ is defined over $\Q$ and factors through the map $D_{\delta} \to E_{\pm}$:
\[
\xymatrix{
D_{\delta} \ar[rd]\ar[dd]_{\phi_{\delta}} &  \\
& E_{\pm} \ar[dl]^{\psi_{\delta}}\\
\P^1& \\
}
\]
We are thus in the situation of elliptic Chabauty -- by construction, any $K$-point of $E_{\pm}$  that is the image of a $\Q$-point of $D_{\delta}$ maps to $\P^1(\Q)$ under $\psi_{\delta}$,
$K$ has degree 4 and $E_{\pm} (K)$ has rank 2.  Magma computes that the only
$K$-rational points of $E$ that map to $\P^1(\Q)$ are the known ones coming from $D_{\delta}$.

It takes a bit of work to compute explicitly the  map $\psi_{\delta} \colon E_{\pm} \to \P^1$. The group $H$ is not normal, so $\psi_{\delta}$ is not given by the quotient of a group of automorphisms. We proceed by brute force. We know the degree of $\psi_{\delta}$ and thus the general form of its equations (by Lemma~\ref{ellipticfunc}). We construct points on $D_{\delta}$ over various number fields; we can map them on the one hand to $E_{\pm}$ and on the other hand to $\P^1$, giving a collection of pairs $(P \in E_{\pm}(\Kbar),\psi_{\delta}(P))$. Sufficiently many such pairs will allow us to compute equations for $\psi_{\delta}$.
\\

See the transcript of computations for code verifying these claims. We find
that there are six rational points on $X_{619}$. Two of these are cusps,
two of these are CM points, corresponding to $j = 16581375$ (CM curves with discriminant $-28$), and two of these
correspond to $j = \frac{857985^{3}}{62^{8}}$. Three other
curves in our list are isomorphic to $X_{619}$. One of these, $X_{649}$
also has non-CM rational points corresponding to $j = \frac{919425^{3}}{496^{4}}$.

%****************************************************************************
\subsection{Analysis of $X_{628}$}
%****************************************************************************

The (probable) analytic rank of the Jacobian of $X_{628}$ is 3, ruling out the possibility of a direct Chabauty argument. While it admits an \'etale double cover,  the Prym variety associated to each twist has rank 1 and Chabauty on the double cover is thus possible but tedious to implement. Alternatively, each \'etale double cover maps to a rank 0 elliptic curve. This map is not explicit and would require a moderate amount of ad hoc work to exploit.

Instead, we exploit the nice model $y^4 = 4xz(x^2 - 2z^2)$ of this curve via the following direct argument. (This is equivalent to \'etale descent, but the simplicity of the model motivates a direct presentation.) An elementary argument shows that, for $xyz \neq 0$, there exist integers $u,v,w$ such that either $x = \pm u^4, z = \pm 4v^4$, and $x^2 - 2z^2 = \pm w^4$, giving $u^8 - 32v^8 = \pm w^4$, or that $x = \pm 2u^4, z = \pm v^4$, and $x^2 - 2z^2 = \pm 2w^4$, giving $2u^8 - v^8 = \pm w^4$. It follows from \cite[Exercise 6.24, Proposition 6.5.4]{cohen:NTI} that the only solution is to the latter equation with $u = v = w = 1$. It
follows that the only points on $y^{4} = 4xz(x^{2} - 2z^{2})$ are
$(0 : 0 : 1)$, $(1 : 0 : 0)$, $(2 : -2 : 1)$ and $(2 : 2 : 1)$.

%****************************************************************************
\subsection{Analysis of  $X_{641}$ and $X_{650}$}
%****************************************************************************
Each of $X_{641}$ and $X_{650}$ have Jacobians of (probable) analytic rank 3, but admit various \'etale double covers.
Each double cover has a twist with local points and such that the associated Prym variety has rank 1. This suggests a Chabauty argument via the Prym, but the details of such an implementation would be complicated. Instead we exploit the nice plane quartic models of these curves.

$X_{641}$ has an affine model  $(x^2 - 2y^2 - 2z^2)^2 = (y^2 - 2yz + 3z^2)(y^2 + z^2)$ and thus admits an \'etale double cover by the curve
\[
\begin{array}{rrcl}
D_{{ \delta}}\colon & y^2 - 2yz + 3z^2 &= & \, { \delta}u^2\\
                          & x^2 - 2y^2 - 2z^2 &= & \, { \delta}uv\\
                         &  y^2 + z^2 &= & \, { \delta}v^2.
\end{array}
\]
The only twist with $2$-adic points is $\delta = 1$. The quotient by the automorphism $[x:y:z:u:v] \mapsto [-x:y:z:-u:-v]$ is the genus 3 hyperelliptic curve $y^2 = -x^8 + 8x^6 - 20x^4 + 16x^2 - 2$. This curve is an unramified double cover of $H\colon y^2 = -x^5 + 8x^4 - 20x^3 + 16x^2 - 2x$. The Jacobian of $H$ has rank $1$, and Chabauty successfully determines the rational points on $H$; computing the preimages of these points on $D$ allows us to conclude that only rational points on $X_{641}$ are the known ones.

Similarly, $X_{650}$ has a model  $y^4 = (x^2 - 2xz - z^2)(x^2 + z^2)$ and thus admits an \'etale double cover by the curve
\[
\begin{array}{rrcl}
D_{{ \delta}}\colon &x^2 - 2xz - z^2 &= & \, { \delta}u^2\\
                          & y^2 &= & \, { \delta}uv\\
                         & x^2 + z^2&= & \, { \delta}v^2
\end{array}
\]
The only twist with $2$-adic points is $\delta = 1$. This genus 5 curve has four automorphisms over $\Q$, and the quotient of $D_1$ by one of the involutions is the genus 3 hyperelliptic curve $y^2 = -x^8 + 2$, which maps to the genus $2$ curve $H\colon y^2 = -x^5 + 2x$. The rank of the Jacobian of $H$ is 1, and Chabauty again proves that the only rational points on  $X_{650}$ are the known points.

% Similarly, $X_{633}$ has a model  $x^4 = (y^2 + z^2)(4x^2 - y^2 - 2yz - 3z^2)$ admitting \'etale double covers $D_{ \delta} \to X_{633}$ as above. Only $D_{1}$ has $2$-adic points, and the associated Prym variety has rank one. As in the previous two cases, $D_1$ has an involution with quotient a hyperelliptic curve $H_3$ of genus 3, which in turn has an involution, the quotient of which is a hyperelliptic curve $H_2$ of genus 2. The Jacobian of $H_2$ has rank 1, and Chabauty again proves that the only rational points on  $X_{633}$ are the known points.

%\begin{remark}
%In total, there are $6$ non-CM $j$-invariants that correspond to rational points on the genus $3$ curves. One $j$-invariant each comes from
%$X_{556}$, $X_{558}$, $X_{563}$, $X_{566}$, $X_{619}$ and $X_{649}$, respectively.
%\end{remark}

%****************************************************************************
\section{Analysis of Rational Points - Genus 5 and 7}
%****************************************************************************

There are 20 genus 5 curves (at most 10 isomorphism classes) and 4 genus 7 curves. % (2 up to isomorphism \jeremy{I should check this.})
The genus 5 curves $X_{686}$ and $X_{689}$ are handled in an ad hoc manner by explicit \'etale descent. The remaining genus 5 curves and all of the genus 7 curves are handled by the modular double cover method (see Subsection \ref{ssec:non-explicit-modular}) or are isomorphic to one of $X_{686}$ or $X_{689}$.

\subsection{Analysis of $X_{689}$}
%****************************************************************************

The curve $X_{689}$ has a model
\begin{align*}
 X_{672}\colon  y^2 &  = x^3 + x^2 - 3x + 1 \\
  w^2 & = 2(y^2 + y(-x+1))(x^2 - 2x - 1)
\end{align*}
The curve $D_{\delta}$
\begin{align*}
  y^2 &  =  x^3 + x^2 - 3x + 1 \\
  \delta w_1^2 & = (x^2 - 2x - 1)    \\
  \delta w_2^2 & = 2(y^2 + y(-x+1))
\end{align*}
is an \'etale double cover of $X_{689}$. (Magma computes that $g(D) = 9$, so this follows from Riemann-Hurwitz.) The cover is unramified outside of $2$, so every rational point on $X_{689}$ lifts to a rational point on $D_{\delta}$ for some $\delta \in \{\pm 1, \pm 2\}$.
The curve $D_{\delta}$ maps to the curve $H_{\delta}$ given by
\begin{align*}
  y^2 - (x^3 + x^2 - 3x + 1) &  = 0\\
  \delta w_1^2 - (x^2 - 2x - 1)    & = 0
\end{align*}
which Magma computes is a genus 3 hyperelliptic curve.
Each of these hyperelliptic curves has Jacobian of rank 1 or 2, with four visibile automorphisms. Taking the quotient by a non-hyperelliptic involution gives a genus 2 hyperelliptic curve, the Jacobians of which have rank at most 1; Chabauty applied to the genus 2 curves thus proves that the only rational points on  $X_{672}$ are the known points.

See the transcript of computations for Magma code verifying these claims.

%****************************************************************************
\subsection{Analysis of $X_{686}$}
%****************************************************************************

Similarly, the curve $X_{686}$ has a model
\begin{align*}
 X_{686}\colon  y^2 &  = x^3 + x^2 - 3x + 1 \\
  w^2 & = 2(y^2 -y(-x+1))(x^2 - 2x - 1)
\end{align*}
and \'etale double covers $D_{\delta} \to X_{686}$ from the curves
\begin{align*}
  y^2 &  =  x^3 + x^2 - 3x + 1 \\
  \delta w_1^2 & = x^2 - 2x - 1    \\
  \delta w_2^2 & = 2(y^2 - y(-x+1)).
\end{align*}
The curve $D_{\delta}$ maps to the genus 3 hyperelliptic curve $H_{\delta}$ given by
\begin{align*}
  y^2 - (x^3 + x^2 - 3x + 1) &  = 0\\
  \delta w_1^2 - (x^2 - 2x - 1)    & = 0.
\end{align*}
These are the same curves as in the analysis of $X_{689}$, and we conclude in the same way that the only rational points on  $X_{686}$ are the known points.

%****************************************************************************
\subsection{Non-explicit, modular double covers}
\label{ssec:non-explicit-modular}
%****************************************************************************

The remaining genus 5  curves and the genus 7 curves are inaccessible via other methods and will be handled by the modular double cover method described in subsection \ref{ssec:etale-descent-via}. We describe this method in more detail here.

Let $S = \{ 1, 2, -1, -2 \}$ and for $\delta \in S$ define
$\chi_{\delta}$ to be the Kronecker character associated to $\Q(\sqrt{\delta})$.
Suppose that $X$ is one of these $20$ such curves, with corresponding subgroup
$H$. In each case, we can find four index $2$ subgroups $K_{\delta}$ with
$\delta \in S$ so that for all $g \in K_{\delta}$,
\[
  g \in K_{1} \text{ if and only if } \chi_{\delta}(\det g) = 1.
\]
Moreover, if the genus of $X$ is $g$, the genus of each $K_{\delta}$ is $2g-1$,
which implies that $X_{K_{\delta}}/X$ is \'etale.

Choose a modular function $h(z)$ for $K_{1}$ so that if $m$ is an element of
the non-identity coset for $K_{1}$ in $H$, then $h | m = -h$. A model for
$X_{K_{1}}$ is then given by $h^{2} = r$, where $r \in \Q(X_{H})$. Moreover,
the condition on elements of $K_{\delta}$ implies that $\sqrt{\delta} h$ is
fixed by the action of $K_{\delta}$ (recall the method of model computations in Section~\ref{sec:comp-equat-x_h}). 
This implies that the curves
$X_{K_{\delta}}$ are the twists (by the elements of $S$) of $K_{1}$, and hence
every rational point on $X_{H}$ lifts to one of the $X_{K_{\delta}}$.
In each case, the $X_{K_{\delta}}$ maps to a curve $X_{n}$ whose model we have
computed that has finitely many rational points (namely a pointless conic,
a pointless genus $1$ curve, or an elliptic curve with rank zero).

Note that the group theory alone provides the properties we need for
the curves $X_{K_{\delta}}$, and we do not construct models for them.

\begin{example}
The curve $X_{695}$ is a genus $5$ curve that has two visible rational points
corresponding to elliptic curves with $j$-invariant $54000$. In this case,
$X_{K_{1}}$ and $X_{K_{-1}}$ map to the rank zero elliptic curve
$X_{285} : y^{2} = x^{3} + x$ (whose two rational
points map to $j = 54000$). The curves $X_{K_{2}}$ and $X_{K_{-2}}$ map to
$X_{283}$, a genus $1$ curve with no $2$-adic points.
\end{example}

See the transcript of computations for further details.
%None of the genus $5$ or $7$ curves have non-cuspidal, non-CM rational points.

\appendix
%****************************************************************************
\section{Proving the mod $N$ representation is surjective}
% \section{Computing $\im \rho_{E,n}$}
\label{App:computing-Image}
%****************************************************************************

% In this appendix we describe a procedure using \cite{dokchitsers:frobenius}
% which allows us to, given an elliptic curve $E$ and knowledge that $\im \rho_{E,n} \subset H$ for some subgroup $H \subset \GL_2(\Z/N\Z)$, prove that $\im \rho_{E,n} = H$.
% \\

Given a Galois extension $K/\Q$ with Galois group $G$, \cite{dokchitsers:frobenius} gives an algorithm that will allow one to determine, for a given unramified prime $p$, the Frobenius conjugacy class
${\rm Frob}_{p}$. Applied to the case $K = \Q(E[N])$, and given initial knowledge that $G$ is a subgroup of some particular $H$ (e.g.~$E$ could arise from a rational point on $X_H$), this gives an algorithm to prove that $\im \rho_{E,N} = H$.

\begin{remark}
When $H = S_n$ or $\GL_2(\F_{\ell})$ this is well understood
(e.g.~in the latter case, if $\ell > 5$ and $G$ contains three elements with particular properties then $G = H$ \cite{Serre:openImage}*{Prop. 19}).
For subgroups of $\GL_2(\F_{\ell})$, \cite{sutherland:imageOfGaloisTalk} recently proved that if two subgroups $H,K$ of $\GL_2(\F_{\ell})$ have the same \emph{signature}, defined to be
\[
s_H := \{(\det A, \tr A, \rank \fix A) : A \in H\},
\]
then $H$ and $K$ are conjugate. (Note that the extra data of $\fix A$ is necessary to distinguish the trivial and order 2 subgroups of $\GL_2(\F_2)$.
Already for $G \subset \GL_2(\Z/\ell^2\Z)$ with $\ell > 2$, the additional data of $\fix  A$ does not suffice -- for instance, the order $\ell$ subgroups generated by
\[
  % \left\langle\left[ \begin{matrix} 1 + p & 0 \\ 0 & 1 \end{matrix} \right]\right\rangle \text{ and }
  % \left\langle\left[ \begin{matrix} 1  & 0 \\ 0 & 1+p \end{matrix} \right]\right\rangle.
\left[ \begin{matrix} 1 - \ell & \ell \\ 0 & 1 + \ell \end{matrix} \right] \text{ and }
\left[ \begin{matrix} 1 -\ell & -\ell \\ 0 & 1+\ell \end{matrix} \right]
\]
have the same signature.)
\end{remark}

\begin{remark}
 It is in principle completely straight-forward to provably determine the image of $\rho_{E,n}$. Indeed, Magma can compute, for any $n$, the corresponding division polynomial, and compute the Galois group of the corresponding field. In practice though, as the degree of $\Q(E[n])$ grows, a direct computation of the Galois group using Magma's built in commands quickly becomes infeasible.
\end{remark}

We now describe the algorithm. Suppose that $K$ is the splitting field of
\[
  F(x) = \prod_{i=1}^{n} (x - a_{i}).
\]
Given some fixed polynomial $h$ and a conjugacy class $C \subseteq G$, construct
the resolvent polynomial
\[
  \Gamma_{C}(X) = \prod_{\sigma \in C} \left(X - \sum_{i=1}^{n} h(a_{i}) \sigma(a_{i})\right).
\]
Theorem 5.3 of \cite{dokchitsers:frobenius} states the following (specializing to extensions of $\Q$).

\begin{thmnonum}
Assume the notation above.
\begin{enumerate}
\item For each conjugacy class $C \subseteq G$, $\Gamma_{C}(X)$ has coefficients
in $\Q$.\\
\item If $p$ is a prime that does not divide the denominators of $F(x)$,
$h(x)$ and the resolvents of $\Gamma_{C}$ and $\Gamma_{C'}$ for different
$C$ and $C'$, then
\[
  {\rm Frob}_{p} = C \iff \Gamma_{C}\left({\rm Tr}_{\frac{\F_{p}[x]}{F(x)} /
    \F_{p}} (h(x) x^{p})\right) \equiv 0 \pmod{p}.
\]
\end{enumerate}
\end{thmnonum}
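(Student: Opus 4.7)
The plan is to handle the two parts separately, each by an essentially formal argument.

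For part (1), I would show that the coefficients of $\Gamma_{C}(X)$, which are elementary symmetric polynomials in the quantities $y_\sigma := \sum_i h(a_i)\sigma(a_i)$ for $\sigma \in C$, are fixed by $G = \Gal(K/\Q)$ and thus rational. The key observation is that the Galois action on the $y_\sigma$ reduces to conjugation on the indexing set: for $\tau \in G$, viewing $G\subseteq S_n$ via its action on the roots and re-indexing $j = \tau(i)$,
\[
\tau(y_\sigma) \;=\; \sum_i h(\tau a_i)\,\tau\sigma(a_i) \;=\; \sum_j h(a_j)\,(\tau\sigma\tau^{-1})(a_j) \;=\; y_{\tau\sigma\tau^{-1}}.
\]
Since $C$ is a conjugacy class, it is stable under $\sigma\mapsto \tau\sigma\tau^{-1}$, so $\tau$ merely permutes the $y_\sigma$ and thus fixes every elementary symmetric function of them. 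Hence $\Gamma_C(X)\in\Q[X]$.

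For part (2), I would first identify the trace. The hypothesis that $p$ avoids the relevant denominators implies in particular that $p$ is coprime to the discriminant of $F$, so $F$ is separable modulo $p$ and $\F_p[x]/(F)$ is étale over $\F_p$. A standard diagonalization in each residue field factor then gives
\[
\mathrm{Tr}_{(\F_p[x]/F)/\F_p}\bigl(h(x)x^p\bigr) \;=\; \sum_{i=1}^n h(\bar a_i)\,\bar a_i^p \;\in\; \F_p,
\]
where $\bar a_i \in \Fbar_p$ are the reductions of the $a_i$ modulo a chosen prime $\pp$ of $\Zbar$ above $p$. Next, $p$ is unramified in $K$, so ${\rm Frob}_\pp\in G$ is well defined and represents the conjugacy class ${\rm Frob}_p$; its defining property $\sigma(a_i)\equiv a_i^p \pmod \pp$ (for $\sigma := {\rm Frob}_\pp$) then yields, after reducing and using that both sides lie in $\Z$,
\[
\mathrm{Tr}_{(\F_p[x]/F)/\F_p}\bigl(h(x)x^p\bigr) \;\equiv\; y_\sigma \pmod p.
\]
By construction $y_\sigma$ is a root of $\Gamma_{[\sigma]}$, proving the forward direction: if ${\rm Frob}_p = C$ then $\Gamma_C(\mathrm{Tr})\equiv 0 \pmod p$.

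For the converse, suppose $\Gamma_C(\mathrm{Tr})\equiv 0 \pmod p$. The previous paragraph already gives $\Gamma_{[\sigma]}(\mathrm{Tr})\equiv 0 \pmod p$, so if $C\ne [\sigma]$ then $\Gamma_C$ and $\Gamma_{[\sigma]}$ would share a common root in $\F_p$, contradicting the hypothesis that the resultant of distinct $\Gamma_{C'},\Gamma_{C''}$ is a unit at $p$. Hence $C = [\sigma] = {\rm Frob}_p$. Nothing in this outline is conceptually subtle; the only real obstacle is bookkeeping — choosing the prime $\pp$ consistently so that $\sigma(a_i)\equiv a_i^p \pmod\pp$ holds for every $i$ simultaneously, and justifying the trace-as-sum-over-roots formula on the possibly reducible $\F_p$-algebra $\F_p[x]/(F)$. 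Both are standard once one notes that the discriminant and leading coefficient of $F$ must divide the finite list of "denominators" the hypothesis requires $p$ to avoid.
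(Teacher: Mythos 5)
The paper does not prove this statement: it is Theorem 5.3 of the cited Dokchitser--Dokchitser paper, quoted verbatim (``Theorem 5.3 of \cite{dokchitsers:frobenius} states the following''), so there is no in-paper proof to compare against. Your argument is the standard one and is essentially the proof given in that reference: Galois acts on the quantities $y_\sigma$ by conjugation of the index, giving rationality of $\Gamma_C$; the trace over $\F_p[x]/(F)$ equals $\sum_i h(\bar a_i)\bar a_i^p$ and hence reduces to $y_{\mathrm{Frob}_\pp}$; and the pairwise-coprimality hypothesis pins down the class. The one inaccuracy is your claim that avoiding the denominators of $F$ forces $p\nmid\operatorname{disc}(F)$ --- it does not (consider $F=x^2-p$); what actually rules out ramification is the resultant condition, since a nontrivial inertia element $\sigma$ would give $y_\sigma\equiv y_e\pmod{\pp}$ and hence a common root of $\Gamma_{[\sigma]}$ and $\Gamma_{\{e\}}$ mod $p$, and in any case the trace-as-sum-over-roots formula holds with multiplicity for any monic $F$, so unramifiedness (not separability of $F$ mod $p$) is all that is needed.
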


We wish to apply this theorem in the case that $G = H$ and when the Galois group of $K/\Q$ may not necessarily be $G$. An examination of the proof shows
that the theorem remains true even if $\Gal(K/\Q)$ is a proper subgroup of $G$.

Our setup is the following. Suppose that $E/\Q$ is an elliptic curve with
a model chosen that has integer coefficients. Suppose also that we know,
a priori, that the image of the mod $N$ Galois representation
is contained in $H \subseteq \GL_{2}(\Z/N\Z)$. The following algorithm gives
a method to prove that the mod $N$ image is equal to $H$. Define
\begin{align*}
  s_{1}(N) &= \begin{cases} 4 & \text{ if } N = 2\\
  p & \text{ if } N > 2 \text{ is a power of the prime } p\\
  1 & \text{ otherwise,}
  \end{cases}\\
  s_{2}(N) &= \begin{cases} 8 & \text{ if } N = 2\\
  9 & \text{ if } N = 3\\
  p & \text{ if } N > 3 \text{ is a power of the prime } p\\
  1 & \text{ otherwise.}
  \end{cases}
\end{align*}

\begin{enumerate}
\item We fix an isomorphism $\phi \colon (\Z/N\Z)^{2} \to E[N]$ and pre-compute decimal
  expansions of $f(P) = s_{1}(N) x(P) + s_{2}(N) y(P)$ for all torsion points of $P$ of order $N$
  on $E$. By Theorem VIII.7.1 of \cite{Silverman:AEC}, these
  numbers are algebraic integers.
\item The action of Galois on the numbers $s_{1}(N) x(P) + s_{2}(N) y(P)$ is given
by some conjugate of $H$. We attempt to identify a unique conjugate of $H$ in $\GL_{2}(\Z/N\Z)$ that
gives this action. We do this by numerically computing
\[
  \sum_{k \in K} f(\phi(k(1,0))) f(\phi(k(0,1))) + f(\phi(k(1,0))) f(\phi(k(1,1))) + f(\phi(k(0,1))) f(\phi(k(1,1)))
\]
for each conjugate $K$ of $H$ inside $\GL_{2}(\Z/N\Z)$. If the image of the mod $N$ representation is contained in $K$, then the sum above will be an integer.
\item We compute the polynomial $F(x)$ with integer coefficients
whose roots are the numbers $f(P) = s_{1}(N) x(P) + s_{2}(N) y(P)$. This polynomial is computed numerically. Knowing the size of the numbers $f(P)$,
we verify that enough decimal precision is used to be able to round the coefficients of $F(x)$ to the nearest integer and obtain the correct result.
\item We compute the resolvent polynomials for all of the conjugacy
classes of $H$ and check that these have no common factor. (In practice, we use $h(x) = x^{3}$ to construct these polynomials. We use a smaller decimal precision for the resolvent polynomials and again check that we can round the coefficients to the nearest integer to obtain the correct result.)
\item Using the resolvent polynomials, we compute the conjugacy class of $\rho_{E,N}({\rm Frob}_{p}) \subseteq H$ for lots of different primes $p$.
\item We enumerate the maximal subgroups of $H$ and determine which conjugacy classes they intersect. We check to see if the conjugacy classes
found in the previous step all lie in some proper maximal subgroup of $H$. If not, then the image of $\rho_{E,N}$ is equal to $H$.
\end{enumerate}

Note that it is not possible for a maximal subgroup $M \subseteq H$ to intersect all of the conjugacy classes of $H$.

\begin{example}
  Let $E : y^{2} = x^{3} + x^{2} - 28x + 48$. This elliptic curve has
  $j$-invariant $78608$, which corresponds to a non-CM rational point
  on $X_{556}$, and hence the $2$-adic image for $E$ is contained in
  $H_{556}$, an index $96$ subgroup of $\GL_{2}(\Z_{2})$ that contains
  $\Gamma(16)$. We must show that the $2$-adic image equals $H_{556}$.
  Every maximal subgroup of $H_{556}$ also contains $\Gamma(16)$, so
  it suffices to compute the image of the mod $16$ Galois
  representation attached to $E$. To do this, we fix an isomorphism
  $E[16] \cong (\Z/16\Z)^{2}$, and precompute decimal expansions of $2
  x(P) + 2 y(P)$ for all $P \in E[16]$, using $1000$ digits of decimal
  precision. There are $24$ conjugates of $H_{556}$ in
  $\GL_{2}(\Z_{2})$, and we find that the expression in step 2 above
  is an integer only for one of the conjugates of $H_{556}$.

The image of $H_{556}$ under the map $\GL_{2}(\Z_{2}) \to
\GL_{2}(\Z/16\Z)$ has $46$ conjugates classes, and we compute the
polynomial $F(x)$ whose roots are the $192$ numbers $2 x(P) + 2
y(P)$. Knowing the sizes of the roots, we can see that no coefficient
of $F(x)$ could be larger than $10^{291}$, and so $1000$ digits of decimal
precision is enough to correctly recover $F(x)$.

We then compute the resolvent polynomials for the $46$
conjugacy classes (using 500 digits of decimal precision). Then, for
each prime $p \leq 30000$, we compute ${\rm
  Tr}_{\frac{\F_{p}[x]}{(F(x))}/\F_{p}}(x^{p+3})$ and check which
resolvent polynomial has this number as a root in $\F_{p}$. Using
this, we can determine which conjugacy class is the image of ${\rm
  Frob}_{p}$. We find that all $46$ conjugacy classes are in the image
of ${\rm Frob}_{p}$ for some $p$. (For example, the smallest prime $p$
which splits completely in $\Q(E[16])$ is $p = 5441$.) As a
consequence the image of the mod $16$ Galois representation of $E$ is
$H_{556}$.
\end{example}

\bibliography{master}
\bibliographystyle{alpha}

\end{document}